\documentclass[12pt, a4paper, leqno]{amsart}

\usepackage[utf8]{inputenc}
\usepackage{amsmath} 
\usepackage{amsfonts}
\usepackage{amssymb}
\usepackage{txfonts}   
\usepackage{amsfonts}     
\usepackage{graphicx}    
\usepackage[dvipsnames]{xcolor}
\usepackage[colorlinks, allcolors=RedViolet]{hyperref}
 
\newcommand{\R}{\varmathbb{R}}
\newcommand{\Z}{\varmathbb{Z}}
\newcommand{\N}{\varmathbb{N}}
\newcommand{\Rn}{{\varmathbb{R}^n}}

\newcommand{\Ha}{\mathcal{H}}
\newcommand{\M}{\mathcal{M}}

\newcommand{\ve}{\varepsilon}

\newcommand{\vint}{\operatornamewithlimits{\boldsymbol{--} \!\!\!\!\!\! \int }}

\def\ess{\qopname\relax o{ess}}
\def\diam{\qopname\relax o{diam}}
\def\loc{\qopname\relax o{loc}}

\def\dist{\qopname\relax o{dist}}
\def\min{\qopname\relax o{min}}
\def\diam{\qopname\relax o{diam}}

\def\phi{\varphi}
\def\s{{s(\cdot)}}
\def\p{{p(\cdot)}}
\newcommand{\It}{{\tilde I}}
\newcommand{\PP}{{\mathcal P}}
\newcommand{\PPln}{{\mathcal P^{\log}}}

\setlength{\marginparwidth}{1.2in}
\let\oldmarginpar\marginpar
\renewcommand\marginpar[1]{\-\oldmarginpar[\raggedleft\footnotesize #1]%
{\raggedright\footnotesize #1}}

\usepackage{amsthm}

\swapnumbers
\theoremstyle{plain}
\newtheorem{theorem}[equation]{Theorem}
\newtheorem{lemma}[equation]{Lemma}

\newtheorem{corollary}[equation]{Corollary}

\theoremstyle{definition}
\newtheorem{definition}[equation]{Definition}

\newtheorem{example}[equation]{Example}

\theoremstyle{remark}
\newtheorem{remark}[equation]{Remark}

\numberwithin{equation}{section}

\pagestyle{headings}

\title[Estimates for the variable order Riesz potential]{Estimates for the variable order Riesz potential with applications}

\author{Petteri Harjulehto}
\address[Petteri Harjulehto]{Department of Mathematics and Statistics,
FI-20014 University of Turku, Finland}
\email{petteri.harjulehto@utu.fi}

\author{Ritva Hurri-Syrj\"anen}
\address[Ritva Hurri-Syrj\"anen]{Department of Mathematics and Statistics,
FI-00014 University of Helsinki, Finland}
\email{ritva.hurri-syrjanen@helsinki.fi}

\date{\today}

\begin{document}

\keywords{exponential integrability, Hausdorff content, non-smooth domain,
point-wise estimate,  Poincar\'e inequality,  Riesz potential, variable exponent}
\subjclass[2020]{46E35 (31C15, 42B20, 46E30, 26D10)}

\begin{abstract} 
We study weak-type estimates and exponential integrability for the variable order Riesz potential.
As an   application we prove an  exponential integrability result with respect to the Hausdorff content for functions  from variable exponent Sobolev spaces.
In particular,  the earlier exponential integrability results are improved 
to a corresponding one with respect to  the Choque integral whenever
 John domains are considered. Moreover, new exponential integrability results also for domains with outward cusps are obtained.
 \end{abstract}

\maketitle

\section{Introduction}

 If  $\Omega$  is  a bounded, open set in the Euclidean space $\R^n$,  $n\geq 2$, and
 $\alpha$  is a continuous function which satisfies $0< \alpha(x) <n$ for every $x \in \Omega$,
then we write  for the operator $I_{\alpha(\cdot)}$ acting on locally integrable functions $f$ in $\Omega$
\begin{equation}\label{RieszCond1}
 I_{\alpha(\cdot)}f(x) := \int_\Omega \frac{|f(y)|}{|x-y|^{n-\alpha(x)}} \, dy.
 \end{equation}
 This is called the variable order Riesz potential.
By defining the variable dimensional Hausdorff content
$\Ha_\infty^{\beta(\cdot)}$
 based on \cite[2.10.1, p.~169]{Federer} in Definition \ref{VarDimHaus} we
prove  that the level sets of the variable order Riesz potential $I_{\alpha(\cdot)}f$ 
are exponentially decaying with respect to the the variable dimensional Hausdorff content.
We show
\begin{theorem}\label{thm:Riesz-limit-case}
Let $\Omega$ be an open, bounded set in $\Rn$ and let
 $\alpha:\Omega \to [0, n)$ be a $\log$-Hölder continuous function with 
\begin{equation*}
0 < {\alpha}^-:= \ess\inf_{x\in \Omega}\alpha (x) \le
{\alpha}^+:= \ess\sup_{x\in \Omega} \alpha(x)<n.
\end{equation*}

Then, there exist constants $c_1$ and $c_2$  independent of the function $f$ such that the inequality
\[
\Ha_\infty^{n-\alpha(\cdot)}(\{x \in \Omega: I_{\alpha(\cdot)}f(x) >t\})
\le c_1\exp (-c_2t^{\frac{n}{n-\alpha^{-}}})
\]
holds
for all $f \in L^{n/\alpha(\cdot)}(\Omega)$ with  $\|f\|_{L^{n/\alpha(\cdot)}(\Omega)}\le \frac{1}{2(1 + |\Omega|)}$.
\end{theorem}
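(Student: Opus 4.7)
The plan is to mimic the classical Hedberg--Adams scheme and adapt it to the variable-order setting. The strategy has three pieces: a pointwise splitting of $I_{\alpha(\cdot)}f(x)$ at a scale $r$, an optimization of $r$ as a function of the level $t$, and an appeal to a Hausdorff-content weak-type bound for the Hardy--Littlewood maximal operator $M$ (of the kind announced in the abstract).

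\textbf{Step 1 (pointwise splitting).} For $x\in\Omega$ and $r\in(0,1)$, I would write
$I_{\alpha(\cdot)}f(x) = \int_{\Omega\cap B(x,r)}\frac{|f(y)|}{|x-y|^{n-\alpha(x)}}\,dy + \int_{\Omega\setminus B(x,r)}\frac{|f(y)|}{|x-y|^{n-\alpha(x)}}\,dy.$
The first integral is handled by the standard dyadic-annuli computation to give the Hedberg-type bound $\le C r^{\alpha(x)}Mf(x)$. For the second I would apply the variable-exponent Hölder inequality against the conjugate exponent $q(y)=n/(n-\alpha(y))$; the modular of $y\mapsto|x-y|^{-(n-\alpha(x))}\chi_{\Omega\setminus B(x,r)}(y)$ in $L^{q(\cdot)}$ reduces, after freezing $\alpha(y)$ to $\alpha(x)$ via log-Hölder continuity, to $\int_{r<|z|<\diam\Omega}|z|^{-n}\,dz\asymp\log(\diam\Omega/r)$. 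Solving for the corresponding Luxemburg norm (where the smallness $\|f\|_{L^{n/\alpha(\cdot)}}\le 1/(2(1+|\Omega|))$ is precisely what keeps the modular-to-norm passage clean with universal constants) produces
$\int_{\Omega\setminus B(x,r)}\frac{|f(y)|}{|x-y|^{n-\alpha(x)}}\,dy\le C\|f\|_{L^{n/\alpha(\cdot)}(\Omega)}\bigl(\log(\diam\Omega/r)\bigr)^{(n-\alpha^-)/n},$
the exponent $(n-\alpha^-)/n = 1/q^-$ arising as the minimum of $1/q(\cdot)$.

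\textbf{Step 2 (balancing and reduction to $M$).} Given $t$, I would choose $r=r_t$ so that the tail bound equals $t/2$, which yields $r_t\asymp\exp(-c_0\,t^{n/(n-\alpha^-)})$ with $c_0$ depending only on $n,\alpha^\pm,|\Omega|$. For $x\in E_t:=\{I_{\alpha(\cdot)}f>t\}$ the local piece must then exceed $t/2$, forcing $Mf(x)>c\,t\,r_t^{-\alpha(x)}\ge c\,t\,r_t^{-\alpha^-}$. Hence $E_t\subseteq\{Mf>\eta_t\}$ with $\eta_t\asymp t\,r_t^{-\alpha^-}$. Plugging this into a Hausdorff-content weak-type estimate for $M$ of the form
$\Ha_\infty^{n-\alpha(\cdot)}(\{Mf>\eta\})\le C\,\eta^{-n/\alpha^-}\|f\|_{L^{n/\alpha(\cdot)}}^{n/\alpha^-}$
gives
$\Ha_\infty^{n-\alpha(\cdot)}(E_t)\le C\,t^{-n/\alpha^-}r_t^{\,n}\le C\,t^{-n/\alpha^-}\exp\bigl(-c_0 n\,t^{n/(n-\alpha^-)}\bigr),$
and the polynomial prefactor is absorbed into the exponential by slightly shrinking $c_2$ (the bound for small $t$ is trivial since $\Ha_\infty^{n-\alpha(\cdot)}(\Omega)$ is finite).

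\textbf{Main obstacle.} The delicate step will be the tail estimate in Step 1: producing the precise exponent $(n-\alpha^-)/n$ on the logarithm, because that is exactly what is inverted to yield the Trudinger--Moser rate $n/(n-\alpha^-)$. This forces careful use of log-Hölder continuity to absorb the factor $|x-y|^{(n-\alpha(x))(1-n/(n-\alpha(y)))}$ arising when freezing $\alpha(y)$ in the modular, and a careful tracking of the infimum of $q(\cdot)$ so that $\alpha^-$ (and not $\alpha^+$) appears in the final exponent. Any slack in either place would weaken the exponential rate in the conclusion.
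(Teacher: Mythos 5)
Your Step 1 is fine: the tail bound via H\"older with exponent $n/\alpha(\cdot)$ and the logarithmic factor $(\log(\diam\Omega/r))^{(n-\alpha^-)/n}$ is the routine computation, and the log-H\"older technicalities you flag there are manageable. The genuine gap is the key lemma you invoke in Step 2: the Hausdorff-content weak-type estimate $\Ha_\infty^{n-\alpha(\cdot)}(\{\M f>\eta\})\le C\,\eta^{-n/\alpha^-}\|f\|_{L^{n/\alpha(\cdot)}(\Omega)}^{n/\alpha^-}$ for the \emph{plain} Hardy--Littlewood maximal operator is false, even for constant $\alpha$, bounded $\Omega$ and small norm. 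Take $f=\lambda\chi_{B(0,\delta)}$ with $\lambda\,\delta^{\alpha}$ equal to a fixed small constant, so that $\|f\|_{L^{n/\alpha}(\Omega)}$ satisfies the hypothesis. Then $\M f\ge 2^{-n}\lambda$ on $B(0,\delta)$, so at the level $\eta=2^{-n-1}\lambda$ the left-hand side is at least $c_n\,\delta^{\,n-\alpha}$ (the content of a ball of radius $\delta$), while the right-hand side is $C(\|f\|_{n/\alpha}/\eta)^{n/\alpha}\approx C\,\delta^{\,n}$; the inequality fails as $\delta\to0$, and note that here $\eta\to\infty$, so even restricting to the large levels $\eta_t$ produced by your balancing does not rescue it. This is not a technical slip but the heart of the matter: at critical integrability the $(n-\alpha)$-dimensional content of level sets of $\M$ is not controlled by the $L^{n/\alpha}$ norm (the same phenomenon as sets of positive $\Ha^{n-\alpha}_\infty$ content having zero capacity), which is precisely why the Lebesgue-measure Hedberg balance does not transfer verbatim to the content setting.

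The paper's proof therefore takes a different route, which you would need to adopt. The near-field is estimated against the variable order \emph{fractional} maximal function, giving the Hedberg bound $I_{\alpha(\cdot)}f(x)\le c\,\max\{1,1/\delta(x)\}^{1/(p^+)'}\big(\M_{\alpha(\cdot)-\ve(\cdot)}f(x)\big)^{\delta(x)/(\delta(x)+\ve(x))}$ of Lemma~\ref{lem:Hedberg}, with $\delta(x)=(n-\alpha(x)p(x))/p(x)$ and with the blow-up of the constant as $\delta\to0$ tracked explicitly; a H\"older step replaces $\M_{\alpha(\cdot)-\ve(\cdot)}f$ by $\big(\M_{r(\alpha(\cdot)-\ve(\cdot))}|f|^r\big)^{1/r}$ so that the content weak-$(1,1)$ estimate for the fractional maximal operator (Lemma~\ref{lem:Haudorff-1}) can be applied to the $L^1$ function $|f|^r$; and the exponential decay is extracted not from a logarithmic tail but from the rate at which the constant degenerates as the auxiliary exponent $p_\sigma(x)=n/\alpha(x)-\sigma$ approaches the critical one: $\sigma$ (hence $\delta$) is chosen as a function of $t$ so that the level on the maximal-function side becomes $\approx\exp(c/\delta^+)\approx\exp\big(c\,t^{n/(n-\alpha^-)}\big)$, and finally $\ve$ is taken proportional to $\alpha$ so that the content dimension is exactly $n-\alpha(\cdot)$, with the small-$t$ range handled trivially by $\Ha_\infty^{n-\alpha(\cdot)}(\Omega)<\infty$. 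If you wish to keep your splitting picture, the split must be against $\M_{\alpha(\cdot)-\ve(\cdot)}$ (not $\M$) and the content estimate must be fed the $L^1$ data $|f|^r$; the step where you control $\M$ itself by $\Ha_\infty^{n-\alpha(\cdot)}$ cannot be repaired.
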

This result could be seen as a generalisation of
the work of \'{A}ngel D. Mart\'{i}nez and Daniel  Spector \cite[Theorem 1.2]{MarS21} to the variable order case.

As  an application
of Theorem \ref{thm:Riesz-limit-case} we prove
exponential integrability estimates for variable exponent Sobolev functions defined on bounded domains.
This result is our second main theorem, Theorem \ref{thm:main-p=n} where we generalise the earlier  known results of the exponential integrability
which were  with respect to the Lebesgue measure 
by taking  the integration with respect to the Hausdorff content 
as the  Choque integral as 
Mart\'{i}nez and  Spector  stated and proved for the usual Riesz potential in \cite{MarS21}.

We state an important corollary of  Theorem \ref{thm:main-p=n} which gives
 new  results to the $s$-John domains.
 Examples of these domains are convex domains and domains with Lipschitz boundaries but also 
domains  which are allowed to have outward cusps of the order $s$.
  
\begin{corollary}  \label{cor:main}
 Let $D$ be an  $s$-John domain in $\R^n$, $n\geq 2,$ with
 $1\le s < \frac{n}{n-1}$. Then, there exist positive constants $a$ and $b$ such that
 \[
\int_D \exp\big( a |u-u_B|^{\frac{n}{s(n-1)}} \big) \, d \Ha^{s(n-1)}_\infty \le b
\]
 for all $u \in L^{1}_{n/(n-s(n-1))}(D)$ with $\|\nabla u\|_{L^{n/(n-s(n-1))}(D)} \le 1$, 
 $B:= B(x_0, \dist(x_0, \partial D))$.
\end{corollary}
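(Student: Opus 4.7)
The plan is to deduce Corollary \ref{cor:main} from Theorem \ref{thm:main-p=n} by specializing the variable order to the constant $\alpha \equiv n - s(n-1)$. The constraint $1 \le s < n/(n-1)$ gives $0 < \alpha \le 1$; the critical exponent $n/\alpha$ equals the Sobolev exponent $n/(n-s(n-1))$ in the hypothesis; the dimension $n-\alpha = s(n-1)$ is exactly the index on the Hausdorff content; and $n/(n-\alpha^-)=n/s(n-1)$ is the power inside the exponential. Since $\alpha$ is constant, the $\log$-Hölder hypothesis of Theorem \ref{thm:Riesz-limit-case} is automatic.

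The first concrete step is the pointwise chaining estimate characteristic of $s$-John domains,
\[
|u(x) - u_B| \le c \int_D \frac{|\nabla u(y)|}{|x-y|^{s(n-1)}}\, dy = c\, I_{\alpha}(|\nabla u|)(x) \qquad \text{for a.e. } x \in D,
\]
obtained by connecting $x$ to the central ball $B$ along an $s$-carrot chain of Whitney balls, applying a Poincaré inequality in each link, and summing: the kernel exponent $s(n-1)$ arises from counting how many chain balls meet a given point $y$, weighted by the $s$-distorted geometry. After an initial normalization of $u$ that pushes $\|\,|\nabla u|\,\|_{L^{n/(n-s(n-1))}(D)}$ below the threshold $1/(2(1+|D|))$ in Theorem \ref{thm:Riesz-limit-case} (which only affects the final constants $a,b$), the pointwise bound fed into that theorem yields
\[
\Ha^{s(n-1)}_\infty\!\bigl(\{x \in D : |u(x)-u_B|>t\}\bigr) \le c_1 \exp\!\bigl(-c_2 t^{n/s(n-1)}\bigr).
\]

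To finish, I would invoke the Choquet layer-cake identity
\[
\int_D \Phi(|u-u_B|)\, d\Ha^{s(n-1)}_\infty = \int_0^\infty \Ha^{s(n-1)}_\infty\!\bigl(\{\Phi(|u-u_B|)>\lambda\}\bigr)\, d\lambda
\]
with $\Phi(\tau) = \exp(a\tau^{n/s(n-1)})$ and perform the change of variable $\lambda = \Phi(t)$; the resulting integral converges as soon as $a < c_2$, which gives the corollary with an explicit $b$. The main obstacle is really only the first step: Theorem \ref{thm:main-p=n} is agnostic to the geometry of the underlying domain, so the full force of the $s$-John assumption must be extracted through the chain argument in such a way that the kernel exponent $s(n-1)$ and the critical relation $p\alpha = n$ line up exactly with the hypotheses of Theorem \ref{thm:Riesz-limit-case}. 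Everything downstream of that verification is routine bookkeeping.
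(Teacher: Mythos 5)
Your proposal is correct and follows essentially the same route as the paper: the paper obtains the corollary by specializing Theorem \ref{thm:main-p=n} to constant $s$, and your argument simply re-runs the proof of that theorem in the constant case, using the same ingredients (the chaining estimate of Lemma \ref{lem:p-w-Riesz}, the level-set bound of Theorem \ref{thm:Riesz-limit-case} after rescaling by the fixed factor $1/(2(1+|D|))$, and the Choquet layer-cake split into $\lambda\le 1$ and $\lambda>1$). The only cosmetic difference is that the paper fixes a small parameter $m$ in the normalization to make the tail integral converge, whereas you phrase the same condition as $a<c_2$ after the change of variables.
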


 In particular  this corollary   generalises
  \cite[Theorem 2]{Tru1967} and \cite[Theorem 3.4]{EH-S2001} to the Choque integral case when $s=1$,
   that is, for John domains.
  Corollary  \ref{cor:main} gives a complete new result whenever $1<s<n/(n-1)$.

We also prove a weak-type estimate for the variable order Riesz potential in Theorem~\ref{hteJ}.
By this result Poincar\'e-type inequalities with respect to  the Luxemburg norm on the variable exponent Lebesgue space  are proved to be valid for $L^1_p$-functions defined
on a  new class of domains in
Theorems \ref{thm:main-p>1} and
\ref{thm:main-p=1}. The new class of domains suits well to the questions for variable exponent Lebesgue and Sobolev Spaces.

This paper is organized as follows.
Definitions for variable exponent Lebesgue spaces and Sobolev spaces are recalled in Section 2. Estimates for the variable order Riesz potential acting on
functions from the variable order Lebesgue spaces are given and proved in Section 3.
The variable dimensional Hausdorff content is defined and its basic properties are proved in Section 4.
The variable order maximal function is recalled there and point-wise  estimates for the Riesz potential are proved there, too.
Our main weak-type estimate Theorem \ref{thm:Riesz-limit-case} is proved in Section 4.
Applications are considered in Section 5.
We start by defining a new class of functions which includes $s$-John domains and then state and prove
Poincar\'e-type inequalities of 
Theorems \ref{thm:main-p>1} and
\ref{thm:main-p=1}.
there.
Our main theorem in Section 5  is on  the exponential integrability, 
Theorem \ref{thm:main-p=n} which yields Corollary \ref{cor:main}.

\section{Notations}

Let $U$ in $ \Rn$ be any bounded, open set. For  any   measurable function $g:U \to \R$ and  measurable set $A\subset U$ we define
\[
g^+_A := \ess\sup_{x\in A} g(x) 
\quad\text{and}\quad 
g^-_A := \ess\inf_{x\in A} g(x)\,.
\]
In the case $A=U$ we write $g^+ := g^+_U$ and   $g^- := g^-_U$.
We say that  the function $g:U \to \R$ satisfies
the \textit{ $\log$-H\"{o}lder continuity} condition if
 there is a constant $c_1>0$ such that
\begin{equation}\label{HolderCond1}
|g(x)-g(y)|\leq \frac{c_1}{\log(e+1/|x-y|)}
\end{equation}
for all $x,y\in U$, $x \neq y$.  
Note that $g$ is  a  $\log$-H\"{o}lder continuous function if and only if $|B|^{g^-_{B\cap U} - g^+_{B\cap U}}\lesssim 1$ for all open balls $B \cap U \neq \emptyset$.

By a \textit{variable exponent} we mean a measurable function $p:U \to [1,
\infty)$ such that $1\le p^-\le p^+ <\infty$.
The set of all variable exponents is denoted by $\PP(U)$.
By $\PPln(U)$ we denote a subset consisting of  all 
$\log$-H\"{o}lder continuous variable exponents.

We define a \emph{modular} on the set of Lebesgue measurable functions  $f$ by
setting
\[
\varrho_{L^{p(\cdot)}(U)}(f) :=\int_{U} |f(x)|^{p(x)}\,dx.
\]
The \emph{variable exponent Lebesgue space $L^{p(\cdot)}(U)$}
consists of all measurable functions $f\colon U\to \R$ for which the
modular $\varrho_{L^{p(\cdot)}(U)}(f)$ is finite. 
We write $f\in L^{p(\cdot )}(U)$.

The Luxemburg--Nakano norm on this space is defined as
\[
\|f\|_{L^{p(\cdot)}(U)}:=
\inf\Big\{\lambda > 0\,\colon\, \varrho_{L^{p(\cdot)}(U)}\big(f/\lambda\big)\leq 1\Big\}.
\]
Equipped with this norm, $L^{p(\cdot)}(U)$ is a Banach space.
We use the abbreviation $\|f\|_\p$ to denote the norm in the whole space.

For open sets $U$, the  variable exponent \emph{Sobolev space
  $L^{1}_{p(\cdot)}(U)$} consists of functions $u\in L^{1}_{\loc}(U)$
for which the  absolute value of the distributional gradient $|\nabla u|$ belongs to
$L^{p(\cdot)}(U)$:
\[
L^{1}_{\p}(U) := \{u \in W^{1, 1}_{\text{loc}}(U): |\nabla u| \in L^\p(U)\},
\]
where $W^{1, 1}_{\text{loc}}(U)$ is the classical local Sobolev space.

More information and proofs for these facts can be found 
in
\cite[Chapters~2,~4,~8, and~9]{DieHHR11} or from \cite{CruZ13}.

\section{Strong- and weak-type estimates}

 Let $\Omega$  be  a bounded, open set in the Euclidean space $\R^n$,  $n\geq 2$.
  We assume that a continuous function $\alpha$ satisfies $0< \alpha(x) <n$ for every $x \in \Omega$.
  We recall  the definition 
  \eqref{RieszCond1}
  from Introduction.
 The potential  in \eqref{RieszCond1}  is called the variable order Riesz potential
  of a function $f$.

If $\alpha$ is  a $\log$-Hölder continuous function, then  we have
\[
 I_{\alpha(\cdot)}f(x) \approx \int_\Omega \frac{|f(y)|}{|x-y|^{n-\alpha(y)}} \, dy,
\]
we refer to  \cite[p. 270]{Has09}.
Let us define
\[
\frac1{p^\#_\alpha(x)} := \frac1{p(x)}- \frac{\alpha (x)}{n} \quad \text{i.e.} \quad  p^\#_\alpha(x) = \frac{np(x)}{n-\alpha(x) p(x)}.
 \]

The variable order Riesz potential has been studied  extensively, see for example \cite{EdmM02, KokM07, MizNOS12, Sam98a, Sam98} and references  therein.
S. Samko has proved the following result.

\begin{lemma}[Theorem~3.2 of \cite{Sam98}]\label{lem:Samko}
  Let $\Omega \subset \Rn$ be  a bounded, open set.
   Let $p \in \PPln(\Omega)$ satisfy $1<p^-\le p^+<\infty$.
  Assume that $\alpha^- >0$  and   $(\alpha p)^+<n$.
 Then $I_{\alpha(\cdot)} : L^{\p}(\Omega) \to L^{p^\#_\alpha(\cdot)}(\Omega)$ is bounded, where $p^\#_\alpha(x) = \frac{np(x)}{n-\alpha(x) p(x)}$. 
 \end{lemma}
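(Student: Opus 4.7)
The plan is to prove a Hedberg-type pointwise inequality and then invoke the boundedness of the Hardy–Littlewood maximal operator on $L^{p(\cdot)}(\Omega)$, which under the $\log$-H\"older condition and $1 < p^- \le p^+ < \infty$ is due to Diening. By homogeneity I may assume $\|f\|_{L^{p(\cdot)}(\Omega)} \le 1$. Fix $x \in \Omega$ and a radius $r>0$ to be optimised, and split
\[
I_{\alpha(\cdot)}f(x) = \int_{B(x,r)\cap\Omega} \frac{|f(y)|}{|x-y|^{n-\alpha(x)}}\,dy + \int_{\Omega\setminus B(x,r)} \frac{|f(y)|}{|x-y|^{n-\alpha(x)}}\,dy =: I_1(x) + I_2(x).
\]

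For $I_1(x)$ I would decompose $B(x,r)$ into dyadic annuli $B(x,2^{-j}r)\setminus B(x,2^{-j-1}r)$, $j\ge 0$, and bound each integrand by $(2^{-j-1}r)^{\alpha(x)-n}$ times the average of $|f|$ over $B(x,2^{-j}r)$. Summing the resulting geometric series yields the standard estimate $I_1(x) \lesssim r^{\alpha(x)}\, Mf(x)$, where $M$ is the centred Hardy–Littlewood maximal operator. For $I_2(x)$ I would apply the variable exponent H\"older inequality (see \cite{DieHHR11}) to get
\[
I_2(x) \le 2\,\|f\|_{L^{p(\cdot)}(\Omega)}\,\bigl\| |x-\cdot|^{-(n-\alpha(x))}\chi_{\Omega\setminus B(x,r)}\bigr\|_{L^{p'(\cdot)}(\Omega)}.
\]
The condition $(\alpha p)^+ < n$ guarantees that $(n-\alpha(x))p'(y)-n > 0$ outside a neighbourhood of $x$, so a direct integration on dyadic annuli away from $x$, together with the bound $|B|^{g^-_B - g^+_B}\lesssim 1$ coming from the $\log$-H\"older hypothesis on $p$ and on $\alpha$, gives $\bigl\| |x-\cdot|^{-(n-\alpha(x))}\chi_{\Omega\setminus B(x,r)}\bigr\|_{p'(\cdot)} \lesssim r^{\alpha(x)-n/p(x)}$ uniformly in $x\in\Omega$.

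Equating the two contributions forces the choice $r = (\|f\|_{p(\cdot)}/Mf(x))^{p(x)/n}$ (appropriately truncated if $Mf(x)$ is too small or too large, using that $\Omega$ is bounded), and substituting back yields the Hedberg-type inequality
\[
I_{\alpha(\cdot)}f(x) \lesssim \bigl(Mf(x)\bigr)^{1-\alpha(x)p(x)/n} \|f\|_{L^{p(\cdot)}(\Omega)}^{\alpha(x)p(x)/n}.
\]
Since $1-\alpha(x)p(x)/n = p(x)/p^\#_\alpha(x)$, raising to the power $p^\#_\alpha(x)$ gives
\[
\bigl(I_{\alpha(\cdot)}f(x)\bigr)^{p^\#_\alpha(x)} \lesssim \bigl(Mf(x)\bigr)^{p(x)} \|f\|_{L^{p(\cdot)}(\Omega)}^{p^\#_\alpha(x)-p(x)}.
\]

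Integrating over $\Omega$ and using $\|f\|_{L^{p(\cdot)}(\Omega)}\le 1$ to absorb the last factor, the modular of $I_{\alpha(\cdot)}f/C$ in $L^{p^\#_\alpha(\cdot)}(\Omega)$ is controlled by the modular of $Mf/C$ in $L^{p(\cdot)}(\Omega)$, which by Diening's maximal theorem is bounded by a constant. Translating the modular bound into a norm bound via the usual correspondence in variable exponent spaces (again using $p^+ < \infty$ and $(p^\#_\alpha)^+ < \infty$, the latter following from $(\alpha p)^+ < n$) finishes the proof. The delicate step I expect to be the real obstacle is the uniform estimate of the $L^{p'(\cdot)}$ norm of the truncated kernel, because it is precisely there that the $\log$-H\"older continuity of both $p$ and $\alpha$ is used, through the control of the oscillations $p'^+_B - p'^-_B$ and $\alpha^+_B - \alpha^-_B$ on small balls.
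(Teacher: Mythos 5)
Your proposal is correct and follows essentially the same route as the paper, which does not reprove this lemma but cites Samko's Theorem~3.2: that proof is precisely the Hedberg-type pointwise bound $I_{\alpha(\cdot)}f(x)\le c\,(\M f(x))^{p(x)/p^\#_\alpha(x)}$ for $\|f\|_{p(\cdot)}\le 1$ (quoted in the paper as \eqref{Samkos_pointwise} in the proof of Theorem~\ref{hteJ}) combined with Diening's maximal theorem on $L^{p(\cdot)}(\Omega)$, exactly as you outline. One minor remark: since the kernel is frozen at $x$, the tail estimate only needs the $\log$-H\"older continuity of $p$ (plus $\alpha^->0$ and $(\alpha p)^+<n$), so the oscillation of $\alpha$ does not actually enter there.
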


 Let us consider  the case $p^-=1$. It is well known that $I_1$, where $\alpha\equiv 1$, does not map $L^1(\Omega) \to L^1(\Omega)$. 
 Thus instead of the strong-type estimate we can have only a weak-type estimate. 
For that we need to assume that $\alpha$ is $\log$-Hölder continuous. The following theorem is a modification of
\cite[Theorem 4.3]{AlmHHL15}. We consider a bounded set and obtain a better control for the extra term  than in \cite[Theorem 4.3]{AlmHHL15}.

\begin{theorem}\label{hteJ}      
  Let $\Omega\subset\Rn$ be a bounded, open set.  Let $p \in \PPln(\Omega)$ satisfy $1 \le p^-\le p^+<\infty$. Assume that  $\alpha$ is  a $\log$-Hölder continuous  function with  $\alpha^- >0$  and   $(\alpha p)^+ <n$.
 Let $p^\#_\alpha(x) = \frac{np(x)}{n-\alpha(x) p(x)}$. Then  there exists a constant $c$ such that for every $f\in L^{p(\cdot)}(\Omega)$ with
  $\Vert f\Vert_{p(\cdot)} \le 1$ and for every $t>0$ 
  the inequality
  \[ 
  \int_{\{x \in \Omega : I_{\alpha(\cdot)} f >t\}} t^{p^\#_\alpha (x)} dx \leq c
  \int_{\Omega} |f(y)|^{p(y)} \, dy + c\big|\{x\in\Omega: 0<|f(x)| \leq
  1\}\big|
  \]
  holds. The constant $c$ depends only  $n$, $\diam(\Omega)$, $\alpha^-$, $(\alpha p)^+$, $p^+$ and $\log$-Hölder constants of $p$ and $\alpha$.
\end{theorem}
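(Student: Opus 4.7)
The plan is to modify the argument of \cite[Theorem~4.3]{AlmHHL15}: the bounded-domain assumption lets us replace their general $\log$-H\"older correction by the cleaner term $|\{0<|f|\le 1\}|$.

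The first step is a Hedberg-type pointwise estimate. Write
\[
I_{\alpha(\cdot)}f(x)=\int_{B(x,r)\cap\Omega}\frac{|f(y)|}{|x-y|^{n-\alpha(x)}}\,dy+\int_{\Omega\setminus B(x,r)}\frac{|f(y)|}{|x-y|^{n-\alpha(x)}}\,dy.
\]
The first integral is $\lesssim r^{\alpha(x)}Mf(x)$, while the second is handled via H\"older's inequality in $L^{\p}(\Omega)$, the $\log$-H\"older continuity of $\alpha$ and $p$ (via the equivalent characterization $|B|^{g^-_{B\cap\Omega}-g^+_{B\cap\Omega}}\lesssim 1$ recalled in Section~2), and the boundedness of $\Omega$. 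Optimizing in $r$ and using $\|f\|_{\p}\le 1$ yields, for $x\in\Omega$,
\[
I_{\alpha(\cdot)}f(x)\le c\bigl[(Mf(x))^{p(x)/p^{\#}_\alpha(x)}+C_0\bigr],
\]
where $C_0$ depends only on the parameters listed in the statement. Raising to $p^{\#}_\alpha(x)$ and using $t^{p^{\#}_\alpha(x)}\le(I_{\alpha(\cdot)}f(x))^{p^{\#}_\alpha(x)}$ on $\{I_{\alpha(\cdot)}f>t\}$ gives
\[
\int_{\{I_{\alpha(\cdot)}f>t\}}t^{p^{\#}_\alpha(x)}\,dx\le c\int_\Omega(Mf(x))^{p(x)}\,dx+c\,C_0^{(p^{\#}_\alpha)^+}|\Omega|.
\]

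The second step is the modular maximal estimate
\[
\int_\Omega(Mf)^{p(x)}\,dx\le c\int_\Omega|f|^{p(x)}\,dx+c|\{0<|f|\le 1\}|,
\]
for $p\in\PPln(\Omega)$ and $\|f\|_{\p}\le 1$ in bounded $\Omega$. I would prove it via the split $f=f\chi_{\{|f|>1\}}+f\chi_E$ with $E:=\{0<|f|\le 1\}$. On $\{|f|>1\}$ one has $|f|^{p(y)}\ge|f|^{p_0}$ for any $p_0\le p^-$, allowing Diening's strong-type estimate for $M$ on $L^{\p}$ (or an appropriate weak-type variant at the endpoint) to yield a contribution $\lesssim\int_\Omega|f|^{p(x)}\,dx$. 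On $E$ the pointwise bound $|f\chi_E|\le\chi_E$ and the weak $(1,1)$-maximal inequality give a contribution $\lesssim|E|$. The constant $c\,C_0^{(p^{\#}_\alpha)^+}|\Omega|$ from Step~1 is absorbed into $c|\{0<|f|\le 1\}|$ by a standard normalization: if $f\not\equiv 0$, then either $|E|$ has a lower bound comparable to $|\Omega|$ or $\int_\Omega|f|^{p(y)}\,dy$ is bounded below by a fixed constant, so one of the right-hand terms already dominates.

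The main obstacle is the $\log$-H\"older bookkeeping in Step~1: applying variable-exponent H\"older to the tail integrand $|x-y|^{-(n-\alpha(x))p'(y)}$ introduces a correction depending on both $x$ and $y$, and this must be absorbed uniformly into the single constant $C_0$ by exploiting $\diam(\Omega)<\infty$. A secondary delicate point is the endpoint $p^-=1$ in the modular maximal estimate, where Diening's strong-type inequality fails; the extra term $|\{0<|f|\le 1\}|$ in the statement exists precisely to compensate for the weak-type behaviour of $M$ there.
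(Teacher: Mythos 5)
Your plan breaks down exactly at the case the theorem is designed to cover, namely $p^-=1$. After the pointwise Hedberg/Samko step you discard the weak-type structure by estimating $\int_{\{I_{\alpha(\cdot)}f>t\}} t^{p^\#_\alpha(x)}\,dx \le c\int_\Omega (\M f)^{p(x)}\,dx + \dots$, so everything hinges on the modular maximal estimate $\int_\Omega (\M f)^{p(x)}\,dx \lesssim \int_\Omega |f|^{p(x)}\,dx + |\{0<|f|\le 1\}|$. That inequality is false when $p^-=1$: take $p\equiv 1$, $\Omega=B(0,1)$ and $f_\ve=c_n^{-1}\ve^{-n}\chi_{B(0,\ve)}$, so $\|f_\ve\|_1=1$, $\{0<|f_\ve|\le 1\}=\emptyset$, while $\int_\Omega \M f_\ve\,dx\gtrsim \log(1/\ve)\to\infty$; the same blow-up persists for $\log$-H\"older exponents with $p^-=1$. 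The extra term $|\{0<|f|\le 1\}|$ does not compensate for the unboundedness of $\M$ on $L^1$ — it compensates for moving the variable exponent inside an average (the ``key estimate'', Theorem 4.2.4 of Diening--Harjulehto--H\"ast\"o--R\r u\v zi\v cka), which is a different phenomenon. Two of your sub-steps fail for the same reason: $\int_\Omega (\M\chi_E)^{p(x)}\,dx$ is not $\lesssim|E|$ when $p(x)=1$ (there is a logarithmic loss), and the proposed absorption of the additive term $C_0^{(p^\#_\alpha)^+}|\Omega|$ is impossible, since for the concentrated $f_\ve$ above both right-hand terms of the theorem are small while $|\Omega|$ is not (also note Samko's pointwise estimate needs no additive constant, so $C_0$ is an artifact of your Step 1, but removing it does not repair Step 2).

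The paper's proof avoids integrating $(\M f)^{p(\cdot)}$ altogether, and this is the essential idea you are missing. Starting from Samko's pointwise bound $I_{\alpha(\cdot)}f(x)\le c(\M f(x))^{p(x)/p^\#_\alpha(x)}$, for each point $z$ of the level set one selects a single ball $B_z$ on which the average of $|f|$ is large, applies the key estimate on that ball to get $t^{p^\#_\alpha(x)}\le c\,A_{B_z}(|f|^{\p}+\chi_{\{0<|f|\le 1\}})$ for all $x\in B_z$ (here the $\log$-H\"older continuity of $p$ and $\alpha$ controls the factors $|B_z|^{p(x)-p(z)}$ and $|B_z|^{\frac{p(z)}{p^\#_\alpha(z)}(p^\#_\alpha(z)-p^\#_\alpha(x))}$), and then sums over a Besicovitch subfamily with bounded overlap. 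In this way only $\int_\Omega(|f|^{\p}+\chi_{\{0<|f|\le 1\}})\,dy$ ever appears, the measure of the level set enters only through the covering, and no strong-type bound for $\M$ is needed — which is precisely what makes the endpoint $p^-=1$ (used with $p\equiv1$ in the remark following the theorem and in Theorem 5.7) accessible. To fix your argument you would have to replace your Step 2 by this ball-by-ball covering argument; as written, the proposal proves the theorem only under the additional hypothesis $p^->1$, where Samko's strong-type result already gives essentially more.
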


 \begin{proof}
As a part of the proof of \cite[Theorem~3.2, p.~278 ]{Sam98}  S. Samko proved the following point-wise inequality:
\begin{equation}\label{Samkos_pointwise}
|I_{\alpha(\cdot)} f(x)|\le c |\M f(x)|^{\frac{p(x)}{p^\#_\alpha(x)}} 
\end{equation}
 for almost all $x \in \Omega$.
Here $\M$ is the  Hardy--Littlewood maximal operator
\[
\M f(x) := \sup_{r>0} \frac{1}{|B(x,r)|}\int_{B(x,r) \cap \Omega} |f| \, dy\,,
\]
and the constant  $c$ depends only on the dimension $n$, $\alpha^-$, $(\alpha p)^+$ and $p^+$.
The proof is based on Hedberg's trick,  \cite{Hed72}.

Thus we have  
\[
  \{x \in \Omega: I_{\alpha(\cdot)} f(x) >t\} 
  \subset \Big\{x \in \Omega: c \,[\M f(x)]^{\frac{p(x)}{p^\#_\alpha(x)}} >t\Big\}=:E.
  \]
  Let us write 
  \[
  A_{B} f :=  \frac{1}{|B]} \int_{B\cap \Omega} |f| \, dy. 
  \]
  
  For every $z\in E$ we choose $B_z:= B(z,r_z)$ such that $\displaystyle
  c \, (A_{B_z} f)^{\frac{p(z)}{p^\#_\alpha(z)}} > t$ 
  where  $c$ is the constant from \eqref{Samkos_pointwise}.
   Let $x\in
  B_z$ and let us raise this inequality to the power $p^\#_\alpha(x)$.
  Assume first that  $\frac{p^\#_\alpha(x)p(z)}{p^\#_\alpha(z)} \ge p(x)$.  
   From now on  $c$  may vary from line to line. Since
 $\|f\|_\p \le 1$ we get $A_{B_z}
  f   \le  c |B_z|^{-1}$, and thus we obtain
  \begin{align*}
    t^{p^\#_\alpha(x)} &\le c \, (A_{B_z} f)^{p(x)}
    (A_{B_z} f)^{\frac{p^\#_\alpha(x)p(z)}{p^\#_\alpha(z)}-p(x)} \le c \, (A_{B_z} f)^{p(x)}
    |B_z|^{p(x)-\frac{p^\#_\alpha(x)p(z)}{p^\#_\alpha(z)}}\\
    &= c\,(A_{B_z} f)^{p(x)} |B_z|^{\frac{p(x)p^\#_\alpha(z)-p^\#_\alpha(x)p(z)}{p^\#_\alpha(z)}} = c\,(A_{B_z} f)^{p(x)} |B_z|^{p(x)- p(z)} |B_z|^{\frac{p(z)}{p^\#_\alpha(z)}(p^\#_\alpha(z) -p^\#_\alpha(x))}.
  \end{align*}
 If $p(x) - p(z) >0$, then $|B_z|^{p(x)- p(z)}$ is uniformly bounded by $c(n)\diam(\Omega)^{np^+}$.
  Otherwise we use  $\log$-H{\"o}lder continuity of $p$ and obtain  that $|B_z|^{p(x)- p(z)}$ is uniformly bounded by \cite[Lemma 4.1.6, p.~101]{DieHHR11}.
  Let us look  at the last term  in the previous estimate. A short calculation gives that
  \[
  p^\#_\alpha(z) -p^\#_\alpha(x) = \frac{n^2(p(z)-p(x))+ np(x)p(z)(\alpha(z)-\alpha(x))}{(n-\alpha(z) p(z))(n-\alpha(x) p(x))}.
  \]
   Since $p$ and $\alpha$  are  bounded $\log$-Hölder continuous functions  and 
    \[
  0<c_1 \le (n-\alpha(z) p(z))(n-\alpha(x) p(x))\le n^2<\infty ,
  \]
   with some positive constant $c_1$, 
  the term $|B_z|^{\frac{p(z)}{p^\#_\alpha(z)}(p^\#_\alpha(z) -p^\#_\alpha(x))}$ is
  uniformly bounded.  
   By \cite[Theorem 4.2.4, p.~108]{DieHHR11} we can move the exponent $p(x)$ inside  the integral and obtain
   \[
   t^{p^\#_\alpha(x)} \le c\,A_{B_z}( |f|^{\p} + \chi_{\{0<|f|\le 1\}}).
   \]
   
   Let us assume now that $\frac{p^\#_\alpha(x)p(z)}{p^\#_\alpha(z)}< p(x)$.  Again by \cite[Theorem 4.2.4, p.~108]{DieHHR11} we obtain
  \[
  \begin{split}
  t^{p^\#_\alpha(x)}
  &\le c \, \Big( A_{B_z} \big(|f| \big) \Big)^{p(x) \frac{p^\#_\alpha(x)p(z)}{p^\#_\alpha(z)p(x)}} \le c\,
  \bigg(A_{B_z}\big(|f|^{\p} + \chi_{\{0<|f|\le 1\}} \big)\bigg)^{\frac{p^\#_\alpha(x)p(z)}{p^\#_\alpha(z)p(x)}}\\
 &\le c\,
  A_{B_z}\big(|f|^{\p} + \chi_{\{0<|f|\le 1\}} \big),
  \end{split}
  \]
  where the last inequality follows from the estimates 
  $A_{B_z}\big(|f|^{\p} + \chi_{\{0<|f|\le 1\}} \big)\ge 1$ 
  and $(p^\#_\alpha(x)p(z))/(p^\#_\alpha(z)p(x)) < 1$.

  By the Besicovitch covering theorem, \cite[2.7]{Mattila}, there
  is a countable covering subfamily $(B_i)$, with the bounded
  overlapping-property.  Thus we have
  \begin{align*}
    \int_E t^{p^\#_\alpha(x)} dx &\le \sum_{i=1}^\infty \int_{B_i \cap \Omega}
    t^{p^\#_\alpha(x)} dx\\ 
    &\le c \sum_{i=1}^\infty \int_{B_i} \, \frac{1}{|B_i|}\int_{B_i \cap \Omega}
    |f(y)|^{p(y)} +\chi_{\{0<|f|\le 1\}}(y)~ dy~ dx
    \\
    &= c \sum_{i=1}^\infty \int_{B_i\cap \Omega} |f(y)|^{p(y)} +\chi_{\{0<|f|\le 1\}}(y)~ dy
    \\
    &\le c \int_{\Omega} |f(y)|^{p(y)} dy + c|\{x\in\Omega:0<|f|\le 1\}|. 
    \qedhere
  \end{align*}
\end{proof}

\begin{remark}
If  $\alpha$ is a constant and $p\equiv 1$ in Theorem ~\ref{hteJ}, then we obtain a weak type estimate.
Indeed,  in this case $p^\#_\alpha=\frac{n}{n-\alpha}$ is a constant  and we have
\[
|\{x \in \Omega : I_{\alpha(\cdot)} f >t\}| t^{p^\#_\alpha} = \int_{\{x \in \Omega : I_{\alpha(\cdot)} f >t\}} t^{p^\#_\alpha} dx \leq c (1+ |\Omega|)
\]
for some constant $c$.
Hence the quasinorm
\[
\|f\|_{L_w^{p^\#_\alpha}(\Omega)}:= \sup_{t>0} |\{I_{\alpha(\cdot)} f >t\}|^{1/p^\#_\alpha} t
\] 
is uniformly bounded for any function satisfying $\|f\|_1 \le 1$.  Using a function $f/\|f\|_1$ we obtain
$\frac{\|f\|_{L_w^{p^\#_\alpha}(\Omega)}}{\|f\|_1}=\big\|\frac{f}{\|f\|_1}\big\|_{L_w^{p^\#_\alpha}(\Omega)} \le c$.
Thus, the weak-type estimate 
\[
\sup_{t>0} |\{I_{\alpha(\cdot)} f >t\}|^{1/p^\#_\alpha} t \le c \|f\|_1
\]
holds.
\end{remark}

 \begin{remark}\label{RieszNewNotation}
 
 In Section~\ref{John} we need the operator
 
 \begin{equation*}
 \It_{s(\cdot)}f(x) := \int_\Omega \frac{|f(y)|}{|x-y|^{s(x)(n-1)}} \, dy,
 \end{equation*}
 where $1\le s(x) <\frac{n}{n-1}$ for every $x \in \Omega$.
 We have $I_{\alpha(\cdot)} = \It_\s$, where
  $\alpha(x) := n-s(x)(n-1)$. 
  We see that $\alpha$ is $\log$-Hölder continuous if and only if  $s$ is $\log$-Hölder continuous.
  We recover the assumptions for $s$:
  $\alpha^- >0$ if and only if $s^+ < \frac{n}{n-1}$.
  $(\alpha p)^+ <n$ if and only if $\sup_{x \in \Omega} (n-s(x)(n-1))p(x)<n$.
 Moreover
 \begin{align*}
q(x) &:= p^\#_{n-s(n-1)} = \frac{np(x)}{n-np(x)+s(x)p(x)(n-1)}\\
&=\frac{np(x)}{p(x)n(s(x)-1)+n -s(x) p(x)}.
 \end{align*}
\end{remark}

\section{Exponential integrability}

In this section we study behaviour of   the variable order Riesz potential  $I_{\alpha(\cdot)} f$ in the limiting case $f \in L^{\frac{n}{\alpha(\cdot)}}$. We follow a recent article
by Mart{\'i}nez and  Spector \cite{MarS21} where they studied the usual Riesz potential, that is, the constant order Riesz potential  in $\Rn$.

We define the variable dimensional Hausdorff content based on \cite[2.10.1, p.~169]{Federer}.
 For  the definition of the usual Hausdorff content we refer to \cite{Ad1998}, too.

\begin{definition}[Variable dimensional Hausdorff content]\label{VarDimHaus}
Let $\beta:\Rn \to (0, n]$ be a function  and $E$ any  set in $\Rn$. Then  we write
\[
\Ha_\infty^{\beta(\cdot)} (E) := \inf \bigg\{ \sum_{i=1}^\infty r_i^{\beta (x_i)}: E \subset \bigcup_{i=1}^\infty B(x_i, r_i)\bigg\}.
\]
\end{definition}

Here $\Ha_\infty^{\beta (\cdot)} (E)$ is called the variable dimensional Hausdorff content of the set $E$, and in the next lemma we show that it is an outer capacity.
 Hence, based on \cite{AH} we could call 
$\Ha_\infty^{\beta(\cdot)}$  also a variable Hausdorff capacity.  
We refer to \cite{AHS}, too.
In the next lemma  $\varmathbb{P}(\Rn)$ is the power set.

\begin{lemma}\label{lem:H}
Let $\beta:\Rn \to (0, n]$.
The set function $\Ha_\infty^{\beta(\cdot)}: \varmathbb{P}(\Rn) \to [0, \infty]$ satisfies the following properties:

\begin{enumerate}
\item $\Ha_\infty^{\beta(\cdot)}(\emptyset) =0$;
\item if $A \subset B$ then $\Ha_\infty^{\beta(\cdot)}(A) \le \Ha_\infty^{\beta(\cdot)} (B)$;
\item  if $E \subset \Rn$ then 
\[
\Ha_\infty^{\beta(\cdot)}(E) = \inf_{E \subset U \text{ and }U \text{ is open}}\Ha_\infty^{\beta(\cdot)}(U); 
\]
\item if $(K_i)$ is a decreasing sequence of compact sets then 
\[
\Ha_\infty^{\beta(\cdot)}\Big(\bigcap_{i=1}^\infty K_i \Big)
= \lim_{i \to \infty} \Ha_\infty^{\beta(\cdot)}(K_i);
\]
\end{enumerate}
\end{lemma}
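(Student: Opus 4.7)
The four claims are standard facts for a Choquet-type set function built from a countably subadditive cover. The plan is to verify them in the order listed, with (1) and (2) being immediate from the definition, (3) essentially free from the fact that the admissible covers consist of open balls, and (4) the only substantive point.

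For (1), covering $\emptyset$ by a single ball of radius $r$ and letting $r \downarrow 0$ drives the infimum to $0$. For (2), any admissible cover of $B$ is an admissible cover of $A$, so the infimum defining $\Ha_\infty^{\beta(\cdot)}(A)$ can only be smaller than (or equal to) the one defining $\Ha_\infty^{\beta(\cdot)}(B)$.

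For (3), the inequality $\le$ follows from (2). For the reverse, I would fix $\ve > 0$ and pick a near-optimal cover $\{B(x_i, r_i)\}_{i=1}^\infty$ of $E$ with $\sum_{i=1}^\infty r_i^{\beta(x_i)} \le \Ha_\infty^{\beta(\cdot)}(E) + \ve$. The set $U := \bigcup_{i=1}^\infty B(x_i, r_i)$ is open, contains $E$, and the very same cover witnesses $\Ha_\infty^{\beta(\cdot)}(U) \le \sum_{i=1}^\infty r_i^{\beta(x_i)} \le \Ha_\infty^{\beta(\cdot)}(E) + \ve$; sending $\ve \downarrow 0$ finishes the argument.

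For (4), set $K_\infty := \bigcap_{i=1}^\infty K_i$. Monotonicity gives $\Ha_\infty^{\beta(\cdot)}(K_\infty) \le \lim_i \Ha_\infty^{\beta(\cdot)}(K_i)$, the limit existing because the sequence $(\Ha_\infty^{\beta(\cdot)}(K_i))$ is decreasing by (2). For the reverse, I would apply (3) to the compact set $K_\infty$: for any open $U \supset K_\infty$ the family $\{U\} \cup \{\Rn \setminus K_i\}_{i \ge 1}$ is an open cover of the compact set $K_1$, so it admits a finite subcover, and using that $(K_i)$ is decreasing one extracts an index $N$ with $K_i \subset U$ for all $i \ge N$. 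Hence $\Ha_\infty^{\beta(\cdot)}(K_i) \le \Ha_\infty^{\beta(\cdot)}(U)$ eventually, giving $\lim_i \Ha_\infty^{\beta(\cdot)}(K_i) \le \Ha_\infty^{\beta(\cdot)}(U)$; taking the infimum over such $U$ via (3) yields $\lim_i \Ha_\infty^{\beta(\cdot)}(K_i) \le \Ha_\infty^{\beta(\cdot)}(K_\infty)$. The only real obstacle is (4), and it rests on the standard topological fact that compactness ``swallows'' any open neighborhood of the intersection; the variable exponent $\beta(\cdot)$ plays no role beyond being held fixed throughout.
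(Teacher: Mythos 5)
Your proposal is correct and follows essentially the same route as the paper: (1) and (2) directly from the definition, (3) by taking the union of a near-optimal ball cover as the open set, and (4) by combining monotonicity with the outer regularity of (3) plus the fact that an open neighborhood of $\bigcap_i K_i$ eventually contains the $K_i$. The only difference is cosmetic: you spell out the compactness step in (4) via a finite subcover of $K_1$ by $\{U\}\cup\{\Rn\setminus K_i\}$, which the paper merely asserts.
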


\begin{proof}
The claim (1) is clear. The claim (2) follows since every cover of $B$ is also a cover of $A$.

Let us next prove (3). By (2) we have $\Ha_\infty^{\beta(\cdot)}(E) \le \inf_{U}\Ha_\infty^{\beta(\cdot)}(U)$.
Let $\ve>0$, and choose a covering that satisfies $E\subset \bigcup_{i=1}^\infty B(x_i, r_i)$ and
\[
 \sum_{i=1}^\infty r_i^{\beta(x_i)} < \Ha_\infty^{\beta(\cdot)}(E) + \ve. 
 \]
We choose that  $V := \bigcup_{i=1}^\infty B(x_i, r_i)$ and note that it is open. Thus 
\[
\inf_{U}\Ha_\infty^{\beta(\cdot)}(U) \le \Ha_\infty^{\beta(\cdot)}(V) \le  \sum_{i=1}^\infty r_i^{\alpha(x_i)} < \Ha_\infty^{\beta(\cdot)}(E) + \ve. 
\]
 Since this holds for all $\ve>0$, we obtain $\inf_{U}\Ha_\infty^{\beta(\cdot)}(U) \le \Ha_\infty^{\beta(\cdot)}(E)$.
 Hence the claim (3) is proved.
   
Let us then prove (4). By (2) we have $\Ha_\infty^{\beta(\cdot)}\big(\bigcap_{i=1}^\infty K_i \big)
\le \lim_{i \to \infty} \Ha_\infty^{\beta(\cdot)}(K_i)$, and the limit exists by (2) since the sequence  of
$(K_i)$ is decreasing.
Let $\ve>0$, and choose by (3) an open set $V$ that satisfies $\bigcap_{i=1}^\infty K_i \subset V$ and 
\[
 \Ha_\infty^{\beta(\cdot)}(V) < \Ha_\infty^{\beta(\cdot)}\Big(\bigcap_{i=1}^\infty K_i \Big) + \ve. 
 \]
 Since $V$ is  open we find $j_0 \in \N$ such that  $K_j \subset V$
  for all $j \ge j_0$. Thus for all $j \ge j_0$ we have
  \[
\Ha_\infty^{\beta(\cdot)}(K_j) \le  \Ha_\infty^{\beta(\cdot)}(V) < \Ha_\infty^{\beta(\cdot)}\Big(\bigcap_{i=1}^\infty K_i \Big) + \ve,
 \]
 and furthermore $ \lim_{i \to \infty} \Ha_\infty^{\beta(\cdot)}(K_i) \le \Ha_\infty^{\beta(\cdot)}\big(\bigcap_{i=1}^\infty K_i \big) + \ve$. 
 Since this holds for all $\ve>0$, we obtain the inequality for the other direction. Hence also the claim (4) is proved.
 \end{proof}

\begin{remark}
We do not know is $\Ha_\infty^{\beta(\cdot)}$ a capacity in the sense of Choquet \cite{Cho53}. More precisely we dot not know does the following hold:
 if $(E_i)$ is a increasing sequence of  sets then 
$\Ha_\infty^{\beta(\cdot)}\big(\bigcup_{i=1}^\infty E_i \big)
= \lim_{i \to \infty} \Ha_\infty^{\beta(\cdot)}(E_i)$.
If $\beta$ is a constant function, then this property has been proved in \cite{Dav56}, see also \cite{Dav70, SioS62}. 
\end{remark}

We recall the definition of the variable order fractional maximal function, \cite{KokM07}.

\begin{definition}[Variable order fractional maximal function]
Let $\alpha:\Omega \to [0, n)$  any measurable function  and $f \in L^1_{\loc}(\Omega)$. Then
\[
\M_{\alpha(\cdot)}f(x)  := \sup_{r>0} \frac{r^{\alpha(x)}}{|B(x, r)|} \int_{B(x,r) \cap \Omega} |f(y)| \, dy\,, \quad x\in\Omega .
\]
\end{definition}

We give a proof  for the lower semicontinuity of this maximal function for the reader's convenience.

\begin{lemma}
Let $\Omega\subset \Rn$ be an open set, and $\alpha: \Omega \to (0, n]$ be continuous. Then
$\M_{\alpha(\cdot)}f$ is lower-semicontinuous.
\end{lemma}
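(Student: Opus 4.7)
The plan is to show that, for every $\lambda > 0$, the superlevel set $\{x \in \Omega : \M_{\alpha(\cdot)}f(x) > \lambda\}$ is open, which is equivalent to lower-semicontinuity. Fix such a $\lambda$ and a point $x_0$ in this set. By definition of the supremum, I can choose a radius $r_0 > 0$ such that
\[
\frac{r_0^{\alpha(x_0)}}{|B(x_0, r_0)|} \int_{B(x_0,r_0) \cap \Omega} |f(y)|\, dy > \lambda.
\]

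The idea is to construct, for every $x$ near $x_0$, a competing radius that keeps the average above $\lambda$. The natural choice is $r_x := r_0 + |x - x_0|$, which guarantees the inclusion $B(x_0, r_0) \subset B(x, r_x)$, since $y \in B(x_0, r_0)$ gives $|y - x| \le |y - x_0| + |x - x_0| < r_x$. Monotonicity of the integral of $|f|$ then yields
\[
\M_{\alpha(\cdot)}f(x) \;\ge\; \frac{r_x^{\alpha(x)}}{|B(x, r_x)|} \int_{B(x,r_x) \cap \Omega} |f|\, dy \;\ge\; \frac{r_x^{\alpha(x) - n}}{\omega_n} \int_{B(x_0, r_0) \cap \Omega} |f|\, dy,
\]
where $\omega_n = |B(0,1)|$ and I have used $|B(x,r_x)| = \omega_n r_x^n$.

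Now the right-hand side is an explicit function of $x$ with the integral acting as a fixed (finite, since $f \in L^1_{\loc}$) positive constant. As $x \to x_0$, $r_x \to r_0$ and $\alpha(x) \to \alpha(x_0)$ by continuity of $\alpha$, so the prefactor satisfies
\[
\frac{r_x^{\alpha(x) - n}}{\omega_n} \longrightarrow \frac{r_0^{\alpha(x_0) - n}}{\omega_n} = \frac{r_0^{\alpha(x_0)}}{|B(x_0, r_0)|}.
\]
Combining these facts, $\liminf_{x \to x_0} \M_{\alpha(\cdot)}f(x) \ge \frac{r_0^{\alpha(x_0)}}{|B(x_0, r_0)|} \int_{B(x_0,r_0) \cap \Omega} |f|\, dy > \lambda$, so there is a neighbourhood of $x_0$ contained in the superlevel set.

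There is no real obstacle here; the only point requiring a little care is that the test radius must depend on $x$ (through $r_x = r_0 + |x-x_0|$) in order to preserve the inclusion of balls, after which continuity of $\alpha$ handles the power $r_x^{\alpha(x) - n}$ via the product rule $r_x^{\alpha(x) - n} = r_x^{\alpha(x) - \alpha(x_0)} \cdot r_x^{\alpha(x_0) - n}$, each factor tending to the right limit as $x \to x_0$. This yields lower-semicontinuity.
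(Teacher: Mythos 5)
Your proof is correct and takes essentially the same route as the paper: both show the superlevel set $\{\M_{\alpha(\cdot)}f>\lambda\}$ is open by enlarging the ball around a nearby point so that it contains the original ball $B(x_0,r_0)$ and using continuity of $\alpha$ to control the power $r^{\alpha(\cdot)-n}$ --- the paper fixes one slightly larger radius $R>r_0$ with the explicit bound $R^{\alpha(x')-\alpha(x_0)}\ge 1/\lambda$, while you let the radius $r_x=r_0+|x-x_0|$ depend on $x$ and pass to a liminf, which is equivalent bookkeeping. Two harmless remarks: the same argument should be run for every level $\lambda\in\R$ (including $\lambda=0$, which is not automatic for a nonnegative function), and finiteness of $\int_{B(x_0,r_0)\cap\Omega}|f|\,dy$ is neither needed nor guaranteed by $f\in L^1_{\loc}(\Omega)$ when the ball meets $\partial\Omega$; an infinite integral only strengthens the lower bound.
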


\begin{proof}
Let us write $E_t:= \{x \in \Omega: \M_{\alpha(\cdot)}f(x) >t\}$. We need to show that $E_t$ is open in $\Rn$.
So let $x \in E_t$. By the definition of $\M_{\alpha(\cdot)}$ for every $x \in E_t$ there exists a radius $r_x>0$ such that
\[
\frac{r_x^{\alpha(x)}}{|B(x, r_x)|} \int_{B(x,r_x) \cap \Omega} |f(y)| \, dy >t.
\]
By the properties of  the Lebesgue measure we obtain
\[
\lim_{R\searrow r_x}\frac{R^{\alpha(x)}}{|B(x, R)|} \int_{B(x,r_x) \cap \Omega} |f(y)| \, dy = \frac{r_x^{\alpha(x)}}{|B(x, r_x)|} \int_{B(x,r_x) \cap \Omega} |f(y)| \, dy.
\]
Hence there exists $R>r_x$ such that 
\[
\lambda:= \frac{\frac{R^{\alpha(x)}}{|B(x, R)|} \int_{B(x,r_x) \cap \Omega} |f(y)| \, dy}{t} >1.
\]
Let $x'$ be such that $|x-x'|< R-r_x$ and assume that  
\begin{equation}\label{condition}
|\alpha(x') - \alpha(x)|< \frac{\log(\lambda)}{\max\{1,|\log(R)|\}}.
\end{equation}
Then
\[
R^{\alpha(x')-\alpha(x)}
\ge \begin{cases}
1, &\text{ if } R\ge 1 \text{ and } \alpha(x') \ge \alpha(x), \\
R^{- \frac{\log(\lambda)}{\max\{1,\log(R)\}}} \ge R^{- \frac{\log(\lambda)}{\log(R)}} = \frac1\lambda &\text{ if } R\ge 1 \text{ and } \alpha(x') < \alpha(x), \\
R^{\frac{\log(\lambda)}{\max\{1, -\log(R)\}}} \ge R^{- \frac{\log(\lambda)}{\log(R)}} = \frac1\lambda &\text{ if } R< 1 \text{ and } \alpha(x') \ge \alpha(x), \\
1, &\text{ if } R< 1 \text{ and } \alpha(x') < \alpha(x). \\
\end{cases}
\]
The condition \eqref{condition}
holds in some ball $B(x, \xi) \cap \Omega$ by the  continuity of $\alpha$. Thus for all $x' \in B(x, \min\{R-r_x, \xi\}) \cap \Omega$ we have 
\[
\begin{split}
\M_{\alpha(\cdot)}f(x') &\ge \frac{R^{\alpha(x')}}{|B(x', R)|} \int_{B(x',R) \cap \Omega} |f(y)| \, dy\\
& \ge R^{\alpha(x')-\alpha(x)} \frac{R^{\alpha(x)}}{|B(x', R)|} \int_{B(x,r_x) \cap \Omega} |f(y)| \, dy\\
&> \frac{\lambda t}{\lambda} =t. 
\end{split}
\]
Hence  the set $E_t$ is open in $\Omega$, and thus  it is open also in $\Rn$.
\end{proof}

The following lemma is a generalisation of \cite[Lemma 3.5]{MarS21} and \cite[Theorem (ii)]{OV} to the case of 
the variational dimensional Hausdorff content.

\begin{lemma}\label{lem:Haudorff-1}
Let $\Omega \subset\Rn$ be a  bounded, open set.
Let $\alpha:\Omega \to [0, n)$  be a measurable function such that  $\alpha^+<n$.
Then there exists a constant $c$, depending only on the dimension $n$,  such that the inequality
\[
\Ha_\infty^{n-\alpha(\cdot)}(\{x \in \Omega: \M_{\alpha(\cdot)}f(x) >t\})
\le  \frac{c(n)}{t} \|f\|_{L^1(\Omega)}
\]
holds for all $f \in L^1(\Omega)$.
\end{lemma}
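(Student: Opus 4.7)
The plan is to adapt the classical Vitali-type argument behind the weak $(1,1)$ bound for the fractional maximal function, keeping track of the fact that the Hausdorff-content exponent depends on the center of each covering ball. Set $E_t := \{y \in \Omega : \M_{\alpha(\cdot)}f(y) > t\}$. For each $x \in E_t$, the definition of $\M_{\alpha(\cdot)}$ furnishes a radius $r_x > 0$ with
\[
\frac{r_x^{\alpha(x)}}{|B(x,r_x)|}\int_{B(x,r_x)\cap\Omega} |f(y)|\, dy > t,
\]
which, after absorbing $|B(0,1)|$ into a dimensional constant, rearranges to
\[
r_x^{\,n-\alpha(x)} \le \frac{c(n)}{t}\int_{B(x,r_x)\cap\Omega} |f(y)|\,dy. \qquad (\ast)
\]

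Before picking out a subfamily I would verify that the radii $r_x$ are uniformly bounded. Indeed, the right side of $(\ast)$ is at most $c(n)\|f\|_{L^1(\Omega)}/t$, and the hypothesis $\alpha^+ < n$ forces $n-\alpha(x) \ge n-\alpha^+ > 0$, so the $r_x$ lie in a bounded interval. The $5r$-covering lemma then extracts from $\{B(x,r_x)\}_{x\in E_t}$ a countable, pairwise disjoint subfamily $\{B(x_i,r_i)\}$ such that $E_t \subset \bigcup_i B(x_i,5r_i)$, with the centers $x_i$ preserved.

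Using this enlarged collection as a competitor in Definition~\ref{VarDimHaus} and plugging in $(\ast)$ at each $x_i$,
\[
\Ha_\infty^{n-\alpha(\cdot)}(E_t) \le \sum_i (5r_i)^{n-\alpha(x_i)} \le 5^n \sum_i r_i^{\,n-\alpha(x_i)} \le \frac{c(n)}{t}\sum_i \int_{B(x_i,r_i)\cap\Omega}|f(y)|\,dy,
\]
and the disjointness of the balls $B(x_i,r_i)$ yields the last sum $\le \|f\|_{L^1(\Omega)}$.

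The only delicate point, and the reason I use the $5r$-lemma rather than a Besicovitch-type argument that might relabel centers, is this compatibility between $(\ast)$ and the Hausdorff-content exponent: the exponent $n-\alpha(x_i)$ appearing in $(\ast)$ is attached to the very same center $x_i$ that Definition~\ref{VarDimHaus} charges, so the passage $r_i \mapsto 5r_i$ costs only the harmless factor $5^{\,n-\alpha(x_i)} \le 5^n$. Uniform boundedness of the $r_x$ (needed for Vitali) and this center-matching are the two small bookkeeping items that a careless adaptation of the constant-exponent proof could miss.
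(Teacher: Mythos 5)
Your proof is correct and follows essentially the same route as the paper's: bound each $r_x^{\,n-\alpha(x)}$ via the defining inequality of $\M_{\alpha(\cdot)}$, note the radii are uniformly bounded because $\alpha^+<n$, extract a disjoint subfamily with the $5r$-covering lemma, and use the dilated balls (same centers, so $5^{\,n-\alpha(x_i)}\le 5^n$) as a competitor in the definition of $\Ha_\infty^{n-\alpha(\cdot)}$. No gaps; the center-matching and uniform boundedness points you flag are exactly the ones the paper relies on.
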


\begin{proof}
Let us write $E_t:= \{x\in \Omega: \M_{\alpha(\cdot)}f(x) >t\}$. 

By the definition of $\M_{\alpha(\cdot)}$ for every $x \in E_t$ there exists a radius $r_x>0$ such that
\[
\frac{r_x^{\alpha(x)}}{|B(x, r_x)|} \int_{B(x,r_x) \cap \Omega} |f(y)| \, dy >t.
\]
This  inequality yields that $r_x^{n- \alpha(x)} < \frac{c(n)}{t}  \|f\|_{L^1(\Omega)}$, and hence $r_x$ is uniformly bounded 
provided that $\alpha^+ <n$. 

Then  $E_t \subset \bigcup_{x \in E_t} B(x, r_x)$,
and hence by  the $5r$-covering lemma \cite{Stein} or 
the simple Vitali covering lemma \cite[Theorem 1.4.1]{AH} we find a countable subcollection of disjoint balls $(B(x_i, r_{x_i}))$ such that

\[
E_t \subset \bigcup_{i=1}^\infty  B(x_i, 5r_{x_i}).
\]
Thus by the definition of the Hausdorff content, we obtain
\[
\begin{split}
\Ha_{\infty}^{n-\alpha(\cdot)} (E_t) &\le \sum_{i=1}^{\infty} (5 r_i)^{n-\alpha(x_i)}\\
&\le  \frac{5^n}{\omega_n t} \sum_{i=1}^{\infty}\int_{B(x_i,r_{x_i}) \cap \Omega} |f(y)| \, dy
\le  \frac{5^n}{\omega_n t} \|f\|_{L^1(\Omega)},
\end{split}
\] 
where $\omega_n$ is the Lebesgue measure of the unit ball.
\end{proof}

The proof of the next lemma follows the proof of \cite[Proposition 2.1]{HarH08}.

\begin{lemma}
  \label{lem:rieszvsM2}
 Let $\Omega\subset \Rn$ be a bounded, open set.  Let $p \in \PPln(\Omega)$ satisfy the inequalities $1 \le p^-\le p^+<\infty$.
Assume that  $\alpha:\Omega \to (0, n)$ satisfies
  $\alpha^- >0$  and   $( \alpha p )^+ <n$.
 Let  $f \in
  L^{p(\cdot)}(\Rn)$ with $\|f\|_{p(\cdot)}\le 1$. Then  there exists a constant $c$ such that for every $x\in \Omega$ and every $r>0$ the inequality
  \begin{align*}
    \int_{\Omega \setminus B(x,r)}
    \frac{|f(y)|}{|x-y|^{n-\alpha(y)}} \,dy \leq c \; \max\Big\{1, \frac{p(x)}{n-\alpha(x) p(x)}\Big\}^{\frac{p^+ -1}{p^+}} r^{-\frac{n-\alpha(x) p(x)}{p(x)}}
  \end{align*}
holds. Here the constant $c$ depends only on the dimension $n$, the $\log$-Hölder constant of $p$ and $\diam(\Omega)$.
\end{lemma}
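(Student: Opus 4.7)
My plan follows Hedberg's classical tail estimate for Riesz kernels, adapted to the variable-exponent setting. Fix $x\in\Omega$ and $r>0$, and apply the classical Hölder inequality with the \emph{constant} exponents $p(x)$ and $p'(x)=p(x)/(p(x)-1)$ (fixed during the $y$-integration):
\[
\int_{\Omega\setminus B(x,r)}\frac{|f(y)|}{|x-y|^{n-\alpha(y)}}\,dy
\le \Bigl(\int_{\Omega}|f(y)|^{p(x)}\,dy\Bigr)^{\!1/p(x)}
\Bigl(\int_{\Omega\setminus B(x,r)}|x-y|^{(\alpha(y)-n)p'(x)}\,dy\Bigr)^{\!1/p'(x)}.
\]
The first factor is $\lesssim 1$: by Theorem~4.2.4 and Lemma~4.1.6 of \cite{DieHHR11}, $\log$-Hölder continuity of $p$ together with the hypothesis $\|f\|_{p(\cdot)}\le 1$ give
$\int_{\Omega}|f(y)|^{p(x)}\,dy\le c\int_{\Omega}|f(y)|^{p(y)}\,dy + c|\Omega|\le c(1+|\Omega|)$.

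For the kernel factor, I use the $\log$-Hölder continuity of $\alpha$ (or its continuity on the compact closure of $\Omega$) to secure $|x-y|^{|\alpha(y)-\alpha(x)|}\le c$ uniformly in $x,y\in\Omega$, which allows replacing $\alpha(y)$ by $\alpha(x)$ in the exponent at the cost of a uniform constant. Passing to polar coordinates, the assumption $(\alpha p)^{+}<n$ is equivalent to $(n-\alpha(x))p'(x)-n=(n-\alpha(x)p(x))/(p(x)-1)>0$, so the radial integral from $r$ to $\diam(\Omega)$ converges and is bounded by
\[
\int_{\Omega\setminus B(x,r)}|x-y|^{(\alpha(y)-n)p'(x)}\,dy\;\lesssim\; r^{(\alpha(x)-n)p'(x)+n}\cdot\frac{p(x)-1}{n-\alpha(x)p(x)}.
\]
Taking the $1/p'(x)$-th power and recalling $(\alpha(x)-n)+n/p'(x)=\alpha(x)-n/p(x)=-(n-\alpha(x)p(x))/p(x)$, the kernel factor becomes
$r^{-(n-\alpha(x)p(x))/p(x)}\cdot\bigl((p(x)-1)/(n-\alpha(x)p(x))\bigr)^{(p(x)-1)/p(x)}$, yielding the desired power of $r$.

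It remains to identify the constant with $\max\{1,\,p(x)/(n-\alpha(x)p(x))\}^{(p^{+}-1)/p^{+}}$. If $(p(x)-1)/(n-\alpha(x)p(x))\le 1$ the above constant is $\le 1$, which is dominated by the stated expression. Otherwise the base exceeds $1$, and monotonicity of $B\mapsto B^{t}$ combined with $(p(x)-1)/p(x)\le (p^{+}-1)/p^{+}$ and $(p(x)-1)\le p(x)$ finishes the comparison. The principal obstacle is the kernel step: transferring $\alpha(y)$ to $\alpha(x)$ in a manner uniform in $r$ requires $\log$-Hölder-type regularity of $\alpha$ on $\Omega$ (together with $\diam(\Omega)<\infty$), and pinning down the slightly asymmetric constant $\max\{1,\,p(x)/(n-\alpha(x)p(x))\}^{(p^{+}-1)/p^{+}}$, in which $p^{+}$ appears only in the exponent, requires the case analysis above.
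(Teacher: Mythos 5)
Your argument breaks down at the very first step: the factor $\bigl(\int_\Omega |f(y)|^{p(x)}\,dy\bigr)^{1/p(x)}$ is \emph{not} uniformly bounded under $\|f\|_{p(\cdot)}\le 1$, and Theorem~4.2.4 of \cite{DieHHR11} does not yield $\int_\Omega|f(y)|^{p(x)}\,dy \le c\int_\Omega |f(y)|^{p(y)}\,dy + c|\Omega|$. That theorem (the ``key estimate'') moves the exponent $p(x)$ onto a ball average $\vint_B |f|\,dy$ with $x\in B$; the normalization by $|B|$ and the restriction $|x-y|\lesssim \diam(B)$ are precisely what allow the log-H\"older condition to absorb the error $|B|^{p(x)-p(y)}$. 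Pointwise, nothing prevents $p(x)-p(y)$ from being a fixed positive amount at distances comparable to $\diam(\Omega)$: take $p\in\PPln$ with $p=2$ near $x$ and $p\equiv \tfrac32$ on a small ball $B(y_0,\epsilon)$ at unit distance from $x$, and $f=\epsilon^{-2n/3}\chi_{B(y_0,\epsilon)}$; then $\varrho_{L^{p(\cdot)}(\Omega)}(f)\approx 1$ while $\int_\Omega |f(y)|^{p(x)}\,dy\approx\epsilon^{-n/3}\to\infty$ as $\epsilon\to 0$. So the global H\"older inequality with the frozen exponent $p(x)$ cannot produce a constant depending only on $n$, the log-H\"older constant of $p$ and $\diam(\Omega)$, and the proof does not go through as written.

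The paper's proof avoids exactly this by never leaving the correct scale: it decomposes $\Omega\setminus B(x,r)$ into dyadic annuli $A(x,2^i)$, $r\le 2^i\le\diam(\Omega)$, applies the variable-exponent (norm) H\"older inequality on each annulus, and uses $\|1\|_{L^{p'(\cdot)}(B(x,2^i)\cap\Omega)}\le c\,|B(x,2^i)|^{1/p'(x)}$ -- this is the only place the log-H\"older constant of $p$ and $\diam(\Omega)$ enter. A discrete H\"older inequality over $i$ with exponents $p^+,(p^+)'$, together with $\sum_i\|f\|^{p^+}_{L^{p(\cdot)}(A(x,2^i))}\le\varrho_{L^{p(\cdot)}(\Omega)}(f)\le 1$, reduces everything to the geometric series $\sum_i 2^{-in(p^+)'/p^\#_\alpha(x)}$, which gives both the power $r^{-(n-\alpha(x)p(x))/p(x)}$ and, via Bernoulli's inequality, the precise factor $\max\{1,p(x)/(n-\alpha(x)p(x))\}^{(p^+-1)/p^+}$. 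If you want to salvage your scheme, you must localize your H\"older step to these annuli (or use the key estimate on balls centered at $x$), not apply it over all of $\Omega$ at once. A secondary issue: your replacement of $\alpha(y)$ by $\alpha(x)$ in the kernel requires log-H\"older continuity of $\alpha$ (mere continuity on $\overline\Omega$ is not enough, since $|x-y|^{-|\alpha(x)-\alpha(y)|}$ must stay bounded as $|x-y|\to 0$), which the lemma does not assume and which would make the constant depend on $\alpha$; the paper's proof in fact estimates the kernel with $\alpha(x)$ throughout (as the lemma is later used in the Hedberg estimate), so no regularity of $\alpha$ is needed there.
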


\begin{proof}
Let us denote  by $A(x,r)$ the annulus $(B(x,r) \setminus B(x,r/2)) \cap \Omega$
and  write $I:= \{i \in \N : r\le 2^i \le  \diam(\Omega)\}$.

Let us first note that Lemma 4.16 and Theorem 4.5.7 of \cite{DieHHR11}
yield that  $\|1\|_{L^{p(\cdot)}}(B) \le c |B|^{1/p(x)}$ for all $x \in B\cap \Omega$ and all ball $B$ with $\diam(B) \le \diam(\Omega)$. Here the constant $c$ depends only on $n$, $\log$-Hölder constant of $p$ and $\diam(\Omega)$.

When we use in \eqref{holderSumEq} H\"older's inequality for the second inequality,
 for the norm of the constant one for the third inequality, and finally Hölder's inequality again, 
we conclude that
\begin{equation}\label{holderSumEq}
\begin{split}
	\int_{\Omega \setminus B(x,r)}
    \frac{|f(y)|}{|x-y|^{n-\alpha(x)}} \,dy &\le\sum_{i\in I} 2^{i(\alpha(x)-n)} \int_{A(x, 2^i)} |u(y)| \, dy\\
	&\lesssim  \sum_{i \in I} 2^{i(\alpha(x)-n)}
	\| u\|_{L^{p(\cdot)}(A(x,2^i))} \| 1\|_{L^{p'(\cdot)}(B(x, 2^i)
	    \cap \Omega)} \\
	  & \lesssim \sum_{i\in I} 2^{i(\alpha(x)-n+ \frac{n}{p'(x)})}
	  \| u\|_{L^{p(\cdot)}(A(x,2^i))} \\
	  & \lesssim
	  \Bigg(\sum_{i\in I} 2^{-i n \frac{(p^+)'}{p^\#_\alpha(x)}}\Bigg)^{\frac1{(p^+)'}}
	  \Bigg(\sum_{i\in I}  \| u\|_{L^{p(\cdot)}(A(x,2^i))}^{p^+}
	  \Bigg)^{\frac1{p^+}}
\end{split}
\end{equation}
for $x\in \Omega$ where $p^\#_\alpha(x)= \frac{np(x)}{n-\alpha(x)p(x)}$. 
Since $\|u\|_{p(\cdot)}\le 1$ we have 
$\|u\|_{p(\cdot)}^{p^+} \le \varrho_{p(\cdot)}(u)$ by \cite[Lemma 3.2.5. p.~75]{DieHHR11}, 
and so
\[
\begin{split}
	\sum_{i\in I}  \| u\|_{L^{p(\cdot)}(A(x,2^i))}^{p^+}
	&\le  \sum_{i\in I} \int_{A(x,2^i)} |u(y)|^{p(y)}\, dy
	\le \int_\Omega |u(y)|^{p(y)}\, dy \le  1.
\end{split}
\]
The first term on the  last line  of \eqref{holderSumEq} 
is a geometric sum. Thus we obtain that 
\[
\Bigg(\sum_{i\in I} 2^{-i n \frac{(p^+)'}{p^\#_\alpha(x)}}\Bigg)^{\frac{1}{(p^+)'}}
\le r^{-\frac{n}{p^\#_\alpha(x)}} \bigg(1-2^{-n\frac{(p^+)'}{p^\#_\alpha(x)}}
\bigg)^{-\frac{1}{(p^+)'}}.
\]	 
Let us write that $k(x):=\max\{1, \frac{p(x)}{n-\alpha(x) p(x)}\}$.
Now $n/p^\#_\alpha(x) \ge 1/k(x)$ and $k(x)\ge 1$. Thus by the inequality
$x^a\le ax+1-a$ (which follows from Bernoulli's inequality) with $x=2^{-(p^+)'}$ and $a=\frac1k$ we obtain 
\[
\big(1-2^{-n\frac{(p^+)'}{p^\#_\alpha(x)}}\big)^{-1}\le 
\big(1-2^{-\frac{(p^+)'}{k(x)}}\big)^{-1} \le  \big(1-2^{-(p^+)'}\big)^{-1} k(x). 
\]
Hence we have
\[
\Bigg(\sum_{i\in I} 2^{-i n \frac{(p^+)'}{p^\#_\alpha(x)}}\Bigg)^{\frac{1}{(p^+)'}}
\le  \big(1-2^{-(p^+)'}\big)^{ - \frac{1}{(p^+)'}} k(x)^{\frac{1}{(p^+)'}} \, r^{-\frac{n}{p^\#_\alpha(x)}}. 
\]
Finally, when  we note by $(p^+)' \in (1, \infty)$ that $ \big(1-2^{-(p^+)'}\big) \in (\frac12, 1)$. Hence we have
the inequality
$\big(1-2^{-(p^+)'}\big)^{ - \frac{1}{(p^+)'}} \le 2$.
\end{proof}

In the variable exponent case the Hedberg-type estimates are well known and variants have been used and proved for example in
\cite[Theorem 3.8]{Die04},
\cite[Proposition 6.1.6]{DieHHR11}, \cite[(4.7)]{HarHL06}, \cite[p. 429]{MizNOS12}, \cite[Lemma 4.6]{MizS05}, \cite[p.~279]{Sam98}.
Here instead of the standard maximal operator we have the variable dimension fractional maximal operator, and we calculate how the constant depends on $p$ and $\alpha$.

\begin{lemma}[Hedberg-type estimate]\label{lem:Hedberg}
Let $\Omega \subset \Rn$ be a bounded, open set.  
Let $p \in \PPln(\Omega)$ satisfy the inequalities $1 \le p^-\le p^+<\infty$.
Assume that  $\alpha:\Omega \to (0, n)$ satisfies   $\alpha^- >0$  and   $( \alpha p )^+ <n$.
If $\delta(x) := \frac{n-\alpha(x) p(x)}{p(x)}$, then 
there exists a constant $c$ such that
for every $\ve: \Omega \to(0, \infty)$, with $\ve(x) \le \alpha(x)$ for all $x \in \Omega$, the inequality
\[
I_{\alpha(\cdot)}f(x) \le c\,  \max\Big\{1, \frac{1}{\delta(x)}\Big\}^{\frac{p^+ -1}{p^+}} (\M_{\alpha(\cdot)-\ve(\cdot)} f(x))^{\frac{\delta(x)}{\delta(x) + \ve(x)}}
\]
holds
for all $f \in L^{\p}(\Omega)$ with $\|f\|_\p \le 1$. Here $c$ depends only on the dimension $n$, $\ve^-$,  the $\log$-Hölder constant of $p$, and 
$\diam(\Omega)$.
\end{lemma}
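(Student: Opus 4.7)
The plan is to carry out a variable-exponent Hedberg trick: split the potential at a radius $r > 0$ to be chosen at the end, bound the near part by the fractional maximal function $\M_{\alpha(\cdot)-\ve(\cdot)} f(x)$, bound the far part by Lemma~\ref{lem:rieszvsM2}, and optimize in $r$. Writing
\[
I_{\alpha(\cdot)}f(x) = \int_{B(x,r)\cap\Omega} \frac{|f(y)|}{|x-y|^{n-\alpha(x)}}\,dy + \int_{\Omega\setminus B(x,r)} \frac{|f(y)|}{|x-y|^{n-\alpha(x)}}\,dy,
\]
Lemma~\ref{lem:rieszvsM2} immediately controls the second summand by $c\,\max\{1,1/\delta(x)\}^{(p^+-1)/p^+} r^{-\delta(x)}$, which already produces the prefactor appearing in the claim.

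For the near part I would use the dyadic decomposition into annuli $A_i(x):=B(x,2^{-i}r)\setminus B(x,2^{-i-1}r)$ for $i\ge 0$. On $A_i(x)$ we have $|x-y|\sim 2^{-i}r$, hence
\[
\int_{A_i(x)} \frac{|f(y)|}{|x-y|^{n-\alpha(x)}}\,dy \lesssim (2^{-i}r)^{\alpha(x)-n} \int_{B(x,2^{-i}r)\cap\Omega} |f(y)|\,dy.
\]
Multiplying and dividing by $(2^{-i}r)^{\alpha(x)-\ve(x)}/|B(x,2^{-i}r)|$ makes the definition of $\M_{\alpha(\cdot)-\ve(\cdot)} f(x)$ visible, so each term is bounded by $c(n)(2^{-i}r)^{\ve(x)}\M_{\alpha(\cdot)-\ve(\cdot)}f(x)$; summing the geometric series in $i$ yields
\[
\int_{B(x,r)\cap\Omega} \frac{|f(y)|}{|x-y|^{n-\alpha(x)}}\,dy \le \frac{c(n)}{1-2^{-\ve(x)}}\,r^{\ve(x)}\,\M_{\alpha(\cdot)-\ve(\cdot)}f(x),
\]
and since $\ve(x)\ge\ve^->0$ the factor $(1-2^{-\ve(x)})^{-1}$ is controlled by a constant depending only on $\ve^-$, which is why $\ve^-$ enters the constant in the conclusion.

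Putting the two pieces together produces a bound of the shape $A\,r^{\ve(x)}+B\,r^{-\delta(x)}$ with $A=c(n,\ve^-)\,\M_{\alpha(\cdot)-\ve(\cdot)}f(x)$ and $B=c\,\max\{1,1/\delta(x)\}^{(p^+-1)/p^+}$. Choosing $r$ so that the two summands are equal, that is $r^{\ve(x)+\delta(x)}=B/A$, turns the sum into a constant multiple of $A^{\delta(x)/(\ve(x)+\delta(x))}B^{\ve(x)/(\ve(x)+\delta(x))}$, and since $\ve(x)/(\ve(x)+\delta(x))\le 1$ the $B$-factor is absorbed into a single copy of $\max\{1,1/\delta(x)\}^{(p^+-1)/p^+}$ up to an absolute constant, yielding the claim.

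The main technical point to check carefully is the uniformity of constants: the geometric series converges with a bound depending only on $\ve^-$ and not on the pointwise value $\ve(x)$, and the prefactor from Lemma~\ref{lem:rieszvsM2} survives the optimization because the exponent $\ve(x)/(\ve(x)+\delta(x))$ is bounded by one. A minor point is the degenerate case $\M_{\alpha(\cdot)-\ve(\cdot)}f(x)=0$, in which $f$ vanishes a.e.\ on every ball around $x$ meeting $\Omega$, so the inequality is trivial; otherwise the optimal $r$ is positive and finite, and even if it exceeds $\diam(\Omega)$ the far-part integral is simply zero, preserving the bound.
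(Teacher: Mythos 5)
Your proof is correct and takes essentially the same route as the paper's: split the potential at radius $r$, bound the near part by a dyadic annuli decomposition against $\M_{\alpha(\cdot)-\ve(\cdot)}f(x)$ with the geometric series controlled through $\ve^-$, bound the far part by Lemma~\ref{lem:rieszvsM2}, and then choose $r$ to balance the two terms. The only (cosmetic) difference is in the endgame: the paper distinguishes the cases where the balancing radius is smaller or larger than $\diam(\Omega)$, while you optimize directly over all $r>0$, which is equally valid since both partial estimates hold for every $r>0$, and your remarks on absorbing the prefactor (exponent $\le 1$, $\max\{1,1/\delta(x)\}\ge 1$) and on the degenerate case $\M_{\alpha(\cdot)-\ve(\cdot)}f(x)=0$ take care of the remaining details.
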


\begin{proof}
For a ball $B(x,r)$ we write
\[
\begin{split}
I_{\alpha(\cdot)}f(x)
&= \int_{B(x, r)\cap \Omega} \frac{|f(y)|}{|x-y|^{n- \alpha(x)}} \, dy + \int_{\Omega \setminus B(x, r)} \frac{|f(y)|}{|x-y|^{n- \alpha(x)}} \, dy\\
& =: I + II.
\end{split}
\]
For the first term we obtain
\[
\begin{split}
I &\le \sum_{j=0}^\infty \int_{B(x, r 2^{-j}) \setminus B(x, r 2^{-j-1}) \cap \Omega}\frac{|f(y)|}{|x-y|^{n- \alpha(x)}} \, dy\\
&\le c(n) \sum_{j=0}^\infty  \frac{(r 2^{-j})^n(r 2^{-j})^{\alpha(x) - \ve(x)}}{(r 2^{-j})^{\alpha(x) - \ve(x)}} \frac{1}{|B(x, r 2^{-j})|}\int_{B(x, r 2^{-j}) \cap \Omega}\frac{|f(y)|}{(r2^{-j-1})^{n- \alpha(x)}} \, dy\\
&\le c(n) r^{\ve(x)} 2^{n- \alpha(x)}\sum_{j=0}^\infty 2^{-j\ve(x)} \M_{\alpha(\cdot) -\ve(\cdot)}f(x) 
=  \frac{c(n)2^{\ve(x)}}{ 2^{\ve(x)}-1} r^{\ve(x)} \M_{\alpha(\cdot) -\ve(\cdot)}f(x).
\end{split}
\]
Next we estimate the second term.
Since $\|f\|_\p \le 1$  we use Lemma~\ref{lem:rieszvsM2} in order to obtain that
\[
\begin{split}
II &\le  c \;\max\Big\{1, \frac{p(x)}{n-\alpha(x) p(x)}\Big\}^{ \frac{1}{(p^+)'}} r^{-\frac{n-\alpha(x) p(x)}{p(x)}}
\\
&= c \; \max\Big\{1, \frac{1}{\delta(x)}\Big\}^{ \frac{1}{(p^+)'}} r^{-\delta(x)},
\end{split}
\]
and the constant depends only on $n$, the $\log$-Hölder constant of $p$, and 
$\diam(\Omega)$.

If 
\[
 \Big( \M_{\alpha(x)-\ve} f(x) \Big)^{-\frac{1}{\delta(x) + \ve(x)}}< \diam(\Omega)
\]
we choose
\[
r(x) = \Big( \M_{\alpha(x)-\ve} f(x) \Big)^{-\frac{1}{\delta(x) + \ve(x)}}.
\]
Hence,
\[
\begin{split}
I_{\alpha(\cdot)}f(x)&\le c \max\Big\{1,  \frac{1}{\delta(x)}\Big\}^{ \frac{1}{(p^+)'}}(\M_{\alpha(x)-\ve(x)} f(x))^{\frac{\delta(x)}{\delta(x) + \ve(x)}},
\end{split}
\]
where the constant depends only on the dimension $n$, the $\log$-Hölder constant of $p$, $\ve^-$, and 
$\diam(\Omega)$.

If 
\[
 \Big( \M_{\alpha(x)-\ve} f(x) \Big)^{-\frac{1}{\delta(x) + \ve(x)}}\ge \diam(\Omega)
\]
we choose $r(x) = \diam(\Omega)$.
Thus we obtain
\[
\begin{split}
I_{\alpha(\cdot)}f(x)&\le  I \le  c \diam(\Omega)^{\ve(x)} \M_{\alpha(\cdot) -\ve(\cdot)}f(x) \le c  \diam(\Omega)^n (\M_{\alpha(x)-\ve} f(x))^{\frac{\delta(x)}{\delta(x) + \ve(x)}},
\end{split}
\]
where the constant depends only on the dimension $n$, $\ve^-$, and  $\diam(\Omega)$.
\end{proof}

Next we prove our main Theorem~\ref{thm:Riesz-limit-case} which
is a  generalisation  of \cite[Theorem 1.2]{MarS21} to the variable order case.
We  clarify dependences of the final constants of the given parameters in Remark \ref{rmk:vakiot}.

\begin{proof}[Proof of Theorem~\ref{thm:Riesz-limit-case}]
Let $p \in \PPln(\Omega)$  satisfy the inequality  $p(x) < \frac{n}{\alpha(x)}$.
Then $\|f\|_{\p} \le 1$ by the assumption $\|f\|_{L^{\frac{n}{\alpha(\cdot)}}(\Omega)}\le \frac{1}{2(1 + |\Omega|)}$  and Corollary 3.3.4 of \cite{DieHHR11}.
By Hedberg's lemma, Lemma~\ref{lem:Hedberg}, we obtain
\[
I_{\alpha(\cdot)}f(x) \le c\, \max\Big\{1,  \frac{1}{\delta(x)}\Big\}^{ \frac1{(p^+)'}} (\M_{\alpha(\cdot)-\ve(\cdot)} f(x))^{\frac{\delta(x)}{\delta(x) + \ve(x)}},
\]
where $c$ depends only on the dimension $n$, $\ve^-$, the $\log$-Hölder constant of $p$, and 
$\diam(\Omega)$.
Let $1<r <  \min\{2, \frac{n}{\alpha^+}\}$.
By Hölder's inequality we obtain
\[
\M_{\alpha(\cdot) -\ve(\cdot)} f(x) \le \big(\M_{r(\alpha(\cdot) -\ve(\cdot))} |f|^r(x)\big)^{\frac1r}.
\]
Thus we have
\[
I_{\alpha(\cdot)}f(x) \le c\, \max\Big\{1,  \frac{1}{\delta(x)}\Big\}^{ \frac1{(p^+)'}} \big(\M_{r(\alpha(\cdot) -\ve(\cdot))} |f|^r(x)\big)^{\frac{\delta(x)}{r(\delta(x) + \ve(x))}}.
\]
These  estimates yield
\begin{equation}\label{equ:vakio-c-1}
\begin{split}
&\Ha_\infty^{n-r(\alpha(\cdot)- \epsilon(\cdot))}(\{x \in \Omega: I_{\alpha(\cdot)}f(x) >t\})\\
&\quad \le 
\Ha_\infty^{n-r(\alpha(\cdot)- \epsilon(\cdot))}\bigg(\Big\{x \in \Omega: \M_{r(\alpha(\cdot) -\ve(\cdot))}|f|^r (x) >
\Big(c\, \min\{1, \delta(x)\}^{ \frac1{(p^+)'}} \, t\bigg)^{\frac{r(\delta(x) + \ve(x))}{\delta(x)}}\Big\}\bigg),
\end{split}
\end{equation}
where $c$ depends only on the dimension $n$, $\ve^-$, and the $\log$-Hölder constant of $p$, and 
$\diam(\Omega)$.

Let us recall that $\delta(x)=\frac{n- \alpha(x) p(x)}{p(x)}$. Let $\sigma \in (0, 1)$ be a small number and choose
$p_\sigma(x) := \frac{n}{\alpha(x)}- \sigma$.  Note that  
 the function
$p_\sigma$ is $\log$-Hölder continuous provided that $\alpha$ is $\log$-Hölder continuous, and the $\log$-Hölder constant of $p_\sigma$ is independent of  $\sigma$. Moreover
 the values of the function
 $p_\sigma$ can be chosen to be near the critical value $\frac{n}{\alpha(x)}$. 
With this choice we have
\[
\delta (x) = \frac{n- \alpha(x) p_\sigma(x)}{p_\sigma(x)} = \frac{ \alpha(x)^2\sigma}{n- \alpha(x)\sigma} 
\in \Big[\frac{(\alpha^-)^2}{n} \sigma, \frac{ (\alpha^+)^2}{n- \alpha^+} \sigma   \Big].
\]
Hence $\delta(x) \to 0^+$ uniformly as $\sigma \to 0^+$.
Assume that $t_0$ is such that 
$c t_0 =e$, where $c$ is from \eqref{equ:vakio-c-1}. 
For every  $t>t_0$ 
we choose $\sigma$ so small that
\begin{equation}\label{equ:delta}
c\, \min\{1, \delta(x)\}^{ \frac1{(p_\sigma^+)'}} \, t \approx e,
\end{equation}
where we denote $A \approx B$ if is there exists a  positive constant $c$  independent of $A$ and $B$ such that 
$c^{-1}A \le B \le c A$.

Thus we have
\[
\begin{split}
&\Ha_\infty^{n-r(\alpha(\cdot)- \epsilon(\cdot))}(\{x \in \Omega: |I_{\alpha(\cdot)}f(x)| >t\})\\
&\quad \le 
\Ha_\infty^{n-r(\alpha(\cdot)- \epsilon(\cdot))}\Big(\Big\{x \in \Omega: \M_{r(\alpha(\cdot) -\ve(\cdot))}|f|^r (x) >
c \exp \Big({\frac{r(\delta(x) + \ve(x))}{\delta(x)}}\Big)\Big\}\Big)\\
&\quad \le \Ha_\infty^{n-r(\alpha(\cdot)- \epsilon(\cdot))}\Big(\Big\{x \in \Omega: \M_{r(\alpha(\cdot) -\ve(\cdot))}|f|^r (x)
 > c\exp \Big(\frac{r \ve^-}{\delta^+}\Big)\Big\}\Big).
\end{split}
\]
Hence by Lemma~\ref{lem:Haudorff-1} we obtain
\[
\Ha_\infty^{n-r(\alpha(\cdot)- \epsilon(\cdot))}(\{x \in \Omega: |I_{\alpha(\cdot)}f(x)| >t\})
\le c \exp \Big(-\frac{r\ve^-}{\delta^+}\Big). 
\]
For the exponent we obtain 
\[
\delta^+ = \sup_{x \in \Omega} \frac{n-\alpha(x) p_\sigma(x)}{p_\sigma(x)}
= \sup_{x \in \Omega} \frac{\alpha(x)^2\sigma}{n-\alpha(x) \sigma} = \frac{(\alpha^+)^2\sigma}{n-\alpha^+ \sigma},
\]
and thus
\[
\frac{\delta(x)}{\delta^+} = \frac{\alpha(x)^2\sigma}{n-\alpha(x) \sigma} \cdot \frac{n-\alpha^+ \sigma}{(\alpha^+)^2\sigma}
\in \Big[\frac{(\alpha^-)^2(n- \alpha^+)}{n(\alpha^+)^2}, \frac{n }{n-\alpha^+} \Big].
\]
This yields that $\delta(x) \approx \delta^+$ for every $x \in \Omega$  and hence by \eqref{equ:delta} we have
$\frac{1}{\delta^+} \approx \frac{1}{\delta(x)} \approx  t^{(p_\sigma^+)'} = t^{\frac{n-\sigma \alpha^-}{n-(1+\sigma)\alpha^-}}$. This implies that
\[
\Ha_\infty^{n-r(\alpha(\cdot)- \epsilon(\cdot))}(\{x \in \Omega: I_{\alpha(\cdot)}f(x) >t\})
\le c_1 \exp \Big({-c_2 r \ve^- t^{\frac{n-\sigma \alpha^-}{n-(1+\sigma)\alpha^-}}}\Big), 
\]
 where $c_1$  depend only on  $n$, $\alpha^-$, $\alpha^+$, and $\log$-Hölder constant of  $\alpha$, and $\diam(\Omega)$, and 
$c_2$ depend only on  $n$, $\alpha^-$, $\alpha^+$, and $\log$-Hölder constant of  $\alpha$.
Finally we choose $\ve(x) := (r-1) \alpha(x)$ and obtain
\[
\Ha_\infty^{n-\alpha(\cdot)}(\{x \in \Omega: I_{\alpha(\cdot)}f(x) >t\})
\le c_1 \exp \Big({-c_2 r(r-1) \alpha^- t^{\frac{n-\sigma \alpha^-}{n-(1+\sigma)\alpha^-}}}\Big), 
\]
whenever $t>t_0$.
Since the left-hand side and the constants are independent of $\sigma$, we take $\sigma \to 0^+$ and obtain 
\[
\Ha_\infty^{n-\alpha(\cdot)}(\{x \in \Omega: I_{\alpha(\cdot)}f(x) >t\})
\le c_1 \exp \Big({-c_2 r(r-1) \alpha^- t^{\frac{n}{n-\alpha^-}}}\Big).
\]

The claim holds also if $0<t\le t_0$. Indeed, for $0<t\le t_0$ 
\[
\begin{split}
\Ha_\infty^{n-\alpha(\cdot)}(\{x \in \Omega: I_{\alpha(\cdot)}f(x) >t\})
&\le \Ha_\infty^{n-\alpha(\cdot)}(\Omega)\\
&\le    \Ha_\infty^{n-\alpha(\cdot)}(\Omega) \exp\Big({c t^{\frac{n}{n-\alpha^-}}}\Big) \exp \Big({-ct^{\frac{n}{n-\alpha^-}}}\Big)\\
&\le   \Ha_\infty^{n-\alpha(\cdot)}(\Omega) \exp \Big({c t_0^{\frac{n}{n-\alpha^-}}}\Big) \exp \Big({-c t^{\frac{n}{n-\alpha^-}}}\Big), 
\end{split}
\]
where  $\Ha_\infty^{n-\alpha(\cdot)}(\Omega) \le (1+\diam(\Omega))^n$ by the definition of the Hausdorff content.
Hence, the theorem is proved.
\end{proof}

\begin{remark}\label{rmk:vakiot}
The estimates in the previous proof yield Theorem~\ref{thm:Riesz-limit-case} with a constant $c_1$ which  depends only on $\diam(\Omega)$, $n$, 
$\alpha^-$, $\alpha^+$, and $\log$-Hölder constant of $\alpha$, and
a constant $c_2$  which depends only on  $n$, $\alpha^-$, $\alpha^+$, and $\log$-Hölder constant of  $\alpha$.
\end{remark}

\section{Applications to non-smooth domains}
\label{John}

The definition of a bounded John domain goes back to F. John
\cite[Definition, p. 402]{J} who defined an inner radius and an outer radius domain, and later this domain was renamed as a John domain
in  \cite[2.1]{MS79}. We generalise this definition so that the shape of the   John cusp can depend on the point.
If $s$ is a constant function we have a so called $s$-John domain studied in \cite{SmiS90}. For other studies and generalisations of John domains  we refer to \cite{HH-S2, HH-SK}. 

\begin{definition}\label{bounded-john}
Let $D\subset \Rn$, $n\geq 2$, be a bounded domain, and $s: D \to [1,\infty)$ a function. The domain $D$ is an $s(\cdot)$-John domain if there exist constants $0< \alpha \le \beta<\infty$ and a point $x_0 \in D$ such that each point $x\in D$ can be joined to $x_0$ by a rectifiable curve $\gamma_x:[0,\ell(\gamma_x)] \to D$, parametrized by its arc length, such that $\gamma_x(0) = x$, $\gamma_x(\ell(\gamma_x)) = x_0$, $\ell(\gamma_x)\leq \beta\,,$ and
\[
t^{s(x)} \leq  \frac{\beta}{\alpha} \dist\big(\gamma_x(t), \partial D\big) \quad \text{for all} \quad t\in[0, \ell(\gamma_x)].
\]
The point $x_0$ is called a John center of $D$ and 
$\gamma_x$ is called a John curve of $x$.
\end{definition}

\begin{example}
We construct a mushrooms-type domain. Let $(r_m)$ be a decreasing sequence of positive real numbers converging to zero. 
Let $Q_{m}$, $m=1,2,\dots$, be a closed cube in $\Rn$ with side length $2r_{m}$. 
 Let $\phi:[0, \infty) \to [0. \infty)$ be an increasing function with $\lim_{t \to 0^+} \phi(t) = \phi(0) =0$ and $\phi(t) >0$ for $t>0$.
Let $P_{m}$, $m=1,2,\dots$,
be a closed rectangle in $\Rn$ which has side length $r_{m}$ for one side and $2\varphi (r_{m})$ for the remaining $n-1$ sides.  Let $Q:=[0, 12] \times [0, 12]$. 
We attach $Q_{m}$ and $P_{m}$ together creating 'mushrooms' which we then attach, as pairwise disjoint sets, to the side $\{(0, x_2, \ldots, x_n): x_2, \ldots, x_n >0\}$ of $Q$ so that the distance from the mushroom to the origin is at least $1$ and at most $4$, see Figure~\ref{fig:domain}.  
We have to assume here also that $\varphi(r_m) \le r_m$.
We need copies of the mushrooms.  By an isometric mapping we transform these mushrooms onto the side $\{(x_1, 0, \ldots, x_n): x_1, x_3, \ldots, x_n >0\}$ of $Q$  and
denote them by $Q_m^*$ and $P_m^*$. So again the distance from the mushroom to the origin is at least $1$ and at most $4$.
We define
\begin{eqnarray}\label{eq:mushroon-domain}
D:=\textrm{int}\left(Q \cup\bigcup_{m=1}^{\infty}\Big(Q_{m}\cup P_{m}\cup Q^{*}_{m}\cup P^{*}_{m}\Big)\right).
\end{eqnarray}
See Figure~\ref{fig:domain}. 

We set that $\phi(t) := t^{\frac32}$.
We define that $s(x) :=1$ in $Q$, $s(x) := \frac32$ in  $\bigcup_{m=1}^{\infty} \Big(Q_{m} \cup Q_{m}^*\big)$, and grows lineary from $1$ to $\frac32$ in each $P_m$ and each $P_m^*$.
Then $D$ is an $s(\cdot)$-John domain, which can be seen as in \cite[Lemma 6.2]{HH-SK}.
\end{example}

\begin{figure}[ht!]
\includegraphics[width=11 cm]{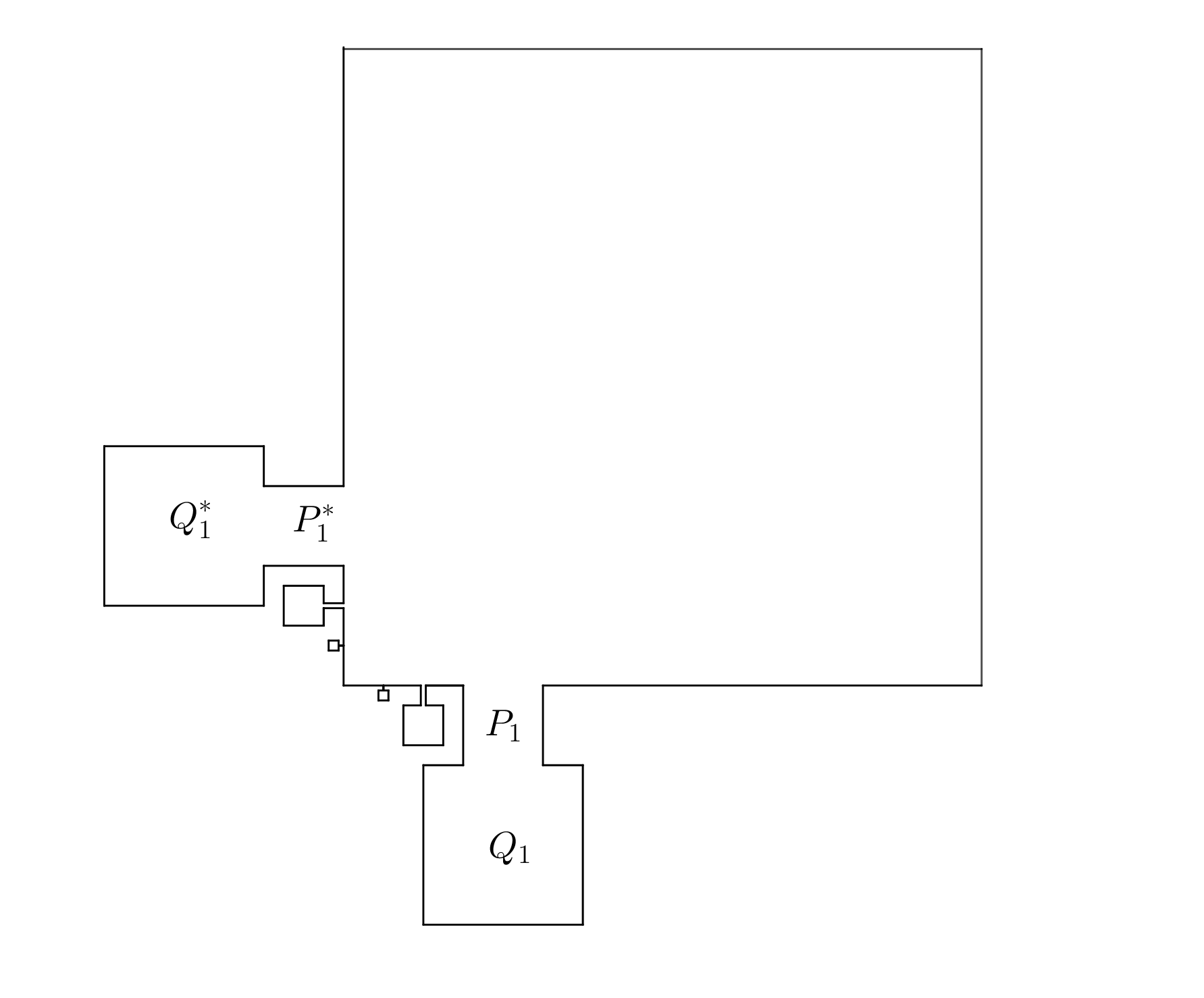}
\caption{$s(\cdot)$-John domain.}\label{fig:domain}
\end{figure}

Next we prove a chaining results for $\s$-John domains.
We refer to \cite[Theorem 9.3]{Hajlasz-Koskela} for the proof in the classical case, and  \cite[Lemma 3.5]{HH-SK} and \cite[4.3. Lemma]{HH-S1} for  generalisations.

\begin{lemma}\label{lem:pallot} 
 Let $D \subset \Rn\,, n\geq 2\,,$ be a $s(\cdot)$-John domain 
with  John constants $\alpha$ and $\beta$. Let  $x_0 \in D$ the John center. 
Then for every $x\in D\setminus B(x_0, \dist(x_0, \partial D))$ there exists a sequence of balls $\big(B(x_i, r_i)\big)$ 
such that $B(x_i, 2r_i)$ is in $D\,$ for each
$i=0,1,\dots\,,$ and
for some constants $K=K(\alpha, \beta, \dist(x_0, \partial D))$, $N=N(n)$, and $M=M(n)$ 
\begin{itemize}
\item[(1)]
$B_0 = B\Big(x_0, \frac12 \dist(x_0,  \partial D)\Big)$;
\item[(2)]
$\dist(x, B_i)^{s(x)}\leq K r_i$, and $r_i \to 0 $ as $i\to \infty$;
\item[(3)]
no point of the domain $D$ belongs to more than $N$ balls $B(x_i, r_i)$; and
\item[(4)]
$|B(x_i, r_i) \cup B(x_{i+1}, r_{i+1})| \leq M |B(x_i, r_i) \cap B(x_{i+1}, r_{i+1})|$.
\end{itemize}
\end{lemma}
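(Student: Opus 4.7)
I will run a Whitney-type chain along the reversed John curve from the center $x_0$ toward the target $x$, with centres spaced by a small fraction of the current Whitney radius rather than dyadically in arclength. Let $\gamma_x:[0,L]\to D$, $L=\ell(\gamma_x)\le\beta$, denote the John curve, so $\gamma_x(0)=x$ and $\gamma_x(L)=x_0$. Declare $B_0:=B(x_0,\tfrac12\dist(x_0,\partial D))$ as required by (1), set $z_0:=x_0$, $\tau_0:=L$, $r_0:=\tfrac12\dist(x_0,\partial D)$, and construct the remaining balls inductively: given $z_i=\gamma_x(\tau_i)$ with $\tau_i>0$ and $r_i:=\tfrac14\dist(z_i,\partial D)$ for $i\ge 1$, let $\tau_{i+1}\in[0,\tau_i)$ be the largest value with $|\gamma_x(\tau_{i+1})-z_i|=c\,r_i$ for a small absolute constant $c\in(0,\tfrac18)$ to be fixed, and put $z_{i+1}:=\gamma_x(\tau_{i+1})$, $B_{i+1}:=B(z_{i+1},r_{i+1})$. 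If no such $\tau_{i+1}$ exists the curve already lies inside $B(z_i,c\,r_i)$; in that event append a tail of concentric balls around $x$ with radii $2^{-j}r_i$, which trivially satisfy (2)--(4) and realise $r_i\to 0$. By construction $B(z_i,2r_i)\subset D$.

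For (2), the arclength parametrisation with $\gamma_x(0)=x$ gives $|z_i-x|\le\tau_i$ for every $i\ge 0$. Applying the $s(\cdot)$-John inequality at parameter $\tau_i$ and using $r_0=\tfrac12\dist(x_0,\partial D)$, $r_i=\tfrac14\dist(z_i,\partial D)$ for $i\ge 1$, one obtains
\[
\dist(x,B_i)^{s(x)} \le |z_i-x|^{s(x)} \le \tau_i^{s(x)} \le \tfrac{\beta}{\alpha}\dist(z_i,\partial D) \le \tfrac{4\beta}{\alpha}\,r_i,
\]
so (2) holds with $K=4\beta/\alpha$. For (4), the $1$-Lipschitz property of $\dist(\cdot,\partial D)$ combined with $|z_{i+1}-z_i|=c\,r_i$ gives $|r_{i+1}-r_i|\le c\,r_i$, and choosing $c$ small enough makes $r_{i+1}\in[\tfrac12 r_i,2r_i]$. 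Then $B_i\cap B_{i+1}$ contains a ball of radius $\gtrsim r_i$ centred on the segment $[z_i,z_{i+1}]$, and (4) follows with $M=M(n)$.

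Property (3) is then a standard packing argument: if a point $y$ belongs to balls $B_{i_1},\dots,B_{i_{N+1}}$ with indices in increasing order, Whitney comparability propagated along the chain gives $r_{i_k}\asymp r_{i_\ell}$ over the whole block, while the inductive spacing $|z_{j+1}-z_j|=c\,r_j$ forces the centres $z_{i_k}$ to be separated by at least a fixed fraction of the common radius; a volume count in $\Rn$ then bounds the number of such indices by a constant $N=N(n)$.

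The main obstacle is that for $s(x)>1$ the $s(\cdot)$-John condition bounds $\dist(\gamma_x(t),\partial D)$ from below only by a power $t^{s(x)}$ of the arclength, so a naive dyadic spacing of the parameters $\tau_i$ does not produce Whitney-comparable radii on the cusp side. Replacing dyadic spacing in $\tau_i$ by the Whitney spacing $|z_{i+1}-z_i|=c\,r_i$ sidesteps this issue and yields the same structure as in the $1$-John case, at the expense of a longer chain but with no change in the dimensional dependence of $N$ and $M$; the bookkeeping mirrors \cite[Theorem 9.3]{Hajlasz-Koskela} and \cite[Lemma 3.5]{HH-SK}, adapted to the variable exponent $s(\cdot)$.
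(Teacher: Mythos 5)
Your construction of the main chain is fine as far as properties (1), (2) and (4) go, but there are two genuine gaps, both concentrated in property (3). First, the ``tail'' of concentric balls $B(x,2^{-j}r_i)$, $j=0,1,2,\dots$, which you need in order to get $r_i\to 0$, does not ``trivially satisfy (2)--(4)'': the point $x\in D$ lies in \emph{every} one of these balls, and a point $y$ with $0<|y-x|=d$ lies in roughly $\log_2(r_i/d)$ of them, which is unbounded as $d\to 0$. So (3) fails outright for the tail. The paper avoids exactly this by taking the radii comparable to $\dist\big(\gamma(t),\partial D\cup\{x\}\big)$ — i.e.\ the distance to the boundary \emph{and to the target point} $x$ — so that the balls shrink toward $x$ without ever containing it, and a point $x'$ can only belong to balls whose radii lie in the window $\tfrac13\dist(x',x)\le r_i\le \dist(x',x)$.

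Second, for the main chain your packing argument for (3) only uses the spacing of \emph{consecutive} centres, $|z_{j+1}-z_j|=c\,r_j$. Nothing in the definition of an $s(\cdot)$-John curve prevents the curve from re-entering the same Whitney ball many times, and then non-consecutive centres of comparable radius can cluster arbitrarily close to a single point $y$; the only a priori bound on their number is of order $\beta/\big(c\,\dist(y,\partial D)\big)$ (and, using the cigar condition at a cusp point with $s(x)>1$, still of order $\dist(y,\partial D)^{1/s(x)-1}$), which blows up near the boundary and is certainly not a constant $N=N(n)$. To get bounded overlap you need a genuine selection step: the paper applies the Besicovitch covering theorem to the whole family $B\big(\gamma(t),\tfrac14\dist(\gamma(t),\partial D\cup\{x\})\big)$, extracts a subfamily with dimensional overlap, doubles the balls, and only then reorders them into a chain (checking (4) for the reordered sequence). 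Your greedy marching scheme skips this selection, so (3) cannot be concluded as stated; either insert a Besicovitch-type pruning of your centres at each scale, or switch to radii penalised by the distance to $x$ as in the paper, and replace the concentric tail by a chain of balls shrinking toward $x$ whose radii are proportional to their distance from $x$.
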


\begin{proof}
Let $\gamma$ be a John curve joining $x$ to $x_0$. Let us write 
\[
B'_0 := B\Big(x_0, \frac14 \dist\big(x_0, \partial D \big)\Big).
\]
Let us consider the balls $B'_0$ and
\[
B \Big(\gamma(t), \frac14 \dist\big(\gamma(t), \partial D \cup \{x\}\big)\Big),
\]
when $t\in(0,l)$,
here $l$ stands for the length of $\gamma$. 
By the Besicovitch covering theorem,  there is a sequence of closed balls $\overline{B'_1}$, $\overline{B'_2}$, \ldots and $\overline{B'_0}$ 
that cover $\{\gamma(t): t\in[0, l] \}\setminus \{x\}$ and have a uniformly bounded overlap depending on $n$ only, \cite[2.7]{Mattila}.
 Let us define open balls $B_i := 2 B'_i$ with center at $x_i:= \gamma(t_i)$ and radius 
 $r_i := \frac12 \dist\big(x_i, \partial D \cup \{x\}\big)$\,, $i=1\,, 2\,, \ldots\,.$ 
That is $B_i=B(x_i,r_i)\,,$
$i=1\,, 2\,, \ldots\,.$ 

For a ball $B_0:=B(x_0,\frac{1}{2}\dist (x_0,\partial D))$ we obtain by the definition of $s(\cdot)$-John domain
\begin{equation*}
\dist(x, B_0)^{s(x)} \le  \ell(\gamma_x)^{s(x)}
\le \frac{\beta}{\alpha}\dist (x_0,\partial D)= \frac{\beta r_0}{2 \alpha}. 
\end{equation*}
Assume then that $i\ge 1$.
If $r_i = \frac12 \dist(x_i, x)$, then 
$ \dist(x,B_i) \le 2 r_i$. 
If $r_i = \frac12 \dist(x_i, \partial D)$, then the definition of a $s(\cdot)$-John domain gives that
\[
\begin{split}
\dist(x, B_i)^{s(x)} &\leq \dist(x, x_i)^{s(x)}\\ 
&\leq t_i^{s(x)} \leq \frac{\beta}{\alpha} \dist(\gamma(t_i), \partial D) \leq \frac{\beta r_i}{2 \alpha}.
\end{split}
\]
Thus, the first part of property (2) holds.

Let us renumerate the balls. Let $B_0$ be as before. If we have chosen balls $B_i$, $i=0, 1, \ldots, m$, then we choose that the ball $B_{m+1}$ is the ball for which $x_j \in B_m$ and $t_j < t_m$, by remembering that $\gamma(t_j) = x_j$ and $\gamma(t_m) = x_m$. Hence, $r_i \to 0$ and $x_i\to x$, as $i\to \infty$. Thus, the second part of property (2) holds.

The point $x$ does not belong to any ball. Let $x'$ be any other point in the domain $D$. The point $x'$ cannot belong to the balls $B_i$ with $3r_i <\dist(x', x)$. If $x' \in B_i$, then 
\begin{equation*}
2 r_i \leq \dist(x, x_i) \leq \dist(x, x') + r_i.
\end{equation*}
Thus, we obtain that $x' \in B_i$ if and only if 
$$\frac13 \dist(x', x) \leq r_i \leq \dist(x, x').$$
The Besicovitch covering theorem implies that the balls with radius of $\frac14$ of the original balls are disjoint. Hence $x'$ belongs to less than or equal to
\[
N \frac{\vert B(x',2r)\vert}{\vert B(0,\frac1{12} r)\vert} = 24^n N
\]
balls $B_i$, where the constant $N$ is from the Besicovitch covering theorem and depends on the dimension $n$ only.
Hence, property (3) holds.

If $r_i = \frac12 \dist(x_i, \partial D)$ and $r_{i+1} = \frac12 \dist(x_{i+1}, \partial D)$, then $r_{i+1} \geq \frac12 r_i$ (since $x_{i+1} \in B_i$) and thus we obtain
\[
\frac{|B_i|}{|B_{i+1}|} \leq \left ( \frac{r_i}{\frac12 r_i}\right)^n = 2^n. 
\]
If $r_i = \frac12 \dist(x_i, x)$ and $r_{i+1} = \frac12 \dist(x_{i+1}, x)$, then $r_{i+1} \geq \frac12 r_i$ and thus we obtain
$|B_i|/|B_{i+1}| \leq 2^n$.   
If $r_i = \frac12 \dist(x_i, \partial D)$ and $r_{i+1} = \frac12 \dist(x_{i+1}, x)$, then  $r_i \leq \frac12 \dist(x_i, x)$ and we obtain the same ratio as before.
Similarly in the case when $r_i = \frac12 \dist(x_i, x)$ and  $r_{i+1} = \frac12 \dist(x_{i+1}, \partial D)$. We have shown
$|B_i| \leq 2^n |B_{i+1}|$. 
In the same manner we obtain $2r_{i+1} \le 3 r_i$ and hence $2^n|B_i| \geq  |B_{i+1}|$.  These yield property (4).
\end{proof}

The previous lemma yields the following lemma.

\begin{lemma}\label{lem:p-w-Riesz}
 Let $D\subset \Rn\,, n\geq 2\,,$ be an $s(\cdot)$-John domain. Then
 \[
 \big|u(x) - u_{B(x_0, \dist(x_0, \partial D))} \big| \le c  \It_{s(\cdot)}|\nabla u| (x)
 \] 
 for all $u \in L^{1}_1(D)$. Here the constant $c$ depends only on $n$, $\alpha$, $\beta$, $s^+$, and $\dist(x_0, \partial D)$.
\end{lemma}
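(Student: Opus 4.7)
The plan is to apply a chaining argument along the chain of balls produced by Lemma~\ref{lem:pallot}, and then convert the resulting sum of local Poincar\'e estimates into the variable-order Riesz integral $\It_{\s}|\nabla u|(x)$ via the key geometric bound $\dist(x, B_i)^{s(x)} \le K r_i$. I first dispose of the case $x \in B := B(x_0, \dist(x_0, \partial D))$ by the classical pointwise Poincar\'e inequality on $B$: since $s(x) \ge 1$ and $|x - y| \le \diam(D)$ for $y \in B$, the factor $|x - y|^{(s(x)-1)(n-1)}$ is uniformly bounded, so $|x-y|^{-(n-1)} \le c\,|x-y|^{-s(x)(n-1)}$ and the resulting integral is dominated by $c\,\It_{\s}|\nabla u|(x)$.

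For $x \in D \setminus B$ a Lebesgue point of $u$, I apply Lemma~\ref{lem:pallot} to obtain a chain $B_i = B(x_i, r_i)$, $i = 0, 1, \dots$, with $u_{B_i} \to u(x)$ by property~(2), and telescope
\[
|u(x) - u_{B_0}| \le \sum_{i=0}^\infty |u_{B_i} - u_{B_{i+1}}|.
\]
Property~(4) forces $|B_i \cap B_{i+1}|$ to be comparable to $|B_i|$ and $r_{i+1}$ to be comparable to $r_i$. Inserting the common average $u_{B_i \cap B_{i+1}}$ and applying the $(1,1)$-Poincar\'e inequality on each of $B_i$ and $B_{i+1}$ (both contained in $D$, since $2B_i \subset D$) yields
\[
|u_{B_i} - u_{B_{i+1}}| \le c\,\frac{r_i}{|B_i|}\int_{B_i} |\nabla u|\,dy + c\,\frac{r_{i+1}}{|B_{i+1}|}\int_{B_{i+1}} |\nabla u|\,dy.
\]
The mismatch $|u_B - u_{B_0}|$ between the averages over $B$ and over $B_0 = B(x_0, \tfrac{1}{2}\dist(x_0, \partial D))$ is handled by a single application of the Poincar\'e inequality on $B$.

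For the conversion to the Riesz integral, property~(2) gives for every $y \in B_i$
\[
|x - y| \le \dist(x, B_i) + 2 r_i \le (K r_i)^{1/s(x)} + 2 r_i \le c\, r_i^{1/s(x)},
\]
where the final inequality absorbs the lower-order term via $r_i \le \diam(D)$ and $s(x) \ge 1$. Hence $|x-y|^{s(x)(n-1)} \le c\, r_i^{n-1}$, so
\[
\frac{r_i}{|B_i|}\int_{B_i} |\nabla u|\,dy \le c \int_{B_i} \frac{|\nabla u(y)|}{|x-y|^{s(x)(n-1)}}\,dy.
\]
Summing over $i$ and invoking the bounded overlap of $(B_i)$ from property~(3) telescopes the right-hand side into $c\,\It_{\s}|\nabla u|(x)$. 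The main technical obstacle is precisely this last conversion: one must verify that the bound $\dist(x, B_i)^{s(x)} \le K r_i$, in which $s(x)$ is fixed by the evaluation point $x$ but the scales $r_i$ vary along the chain, combines with $r_i \le \diam(D)$ and $s(x) \ge 1$ to produce the uniform estimate $|x-y| \lesssim r_i^{1/s(x)}$, with a final constant depending only on $n$, $\alpha$, $\beta$, $s^+$, and $\dist(x_0, \partial D)$, as stated.
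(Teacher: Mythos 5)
Your proposal is correct and follows essentially the same route as the paper: the case $x\in B$ via the pointwise Poincar\'e estimate on a ball plus boundedness of $D$, and the case $x\in D\setminus B$ via the chain of Lemma~\ref{lem:pallot}, telescoping with properties (2)--(4), the $(1,1)$-Poincar\'e inequality on each ball, and the conversion $|x-y|\lesssim r_i^{1/s(x)}$ followed by bounded overlap. Your explicit treatment of the mismatch between $u_{B_0}$ (half-radius ball) and $u_B$ is a small extra care the paper leaves implicit, and it is handled correctly.
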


\begin{proof}
If $x\in  B(x_0, \dist(x_0, \partial D))$, then 
\[
\big|u(x) - u_{B(x_0,  \dist(x_0, \partial D))}\big| \leq \frac{\diam (B(x_0,  \dist(x_0, \partial D)))^n}{
n|B(x_0,  \dist(x_0, \partial D))|} \int_{
 B(x_0,  \dist(x_0, \partial D))}  \frac{|\nabla u (y)|}{ |x-y|^{n-1}} \, dy\,
\]
by \cite[Lemma 7.16]{Gilbarg-Trudinger}. Since the domain is bounded we have 
\[
\begin{split}
|x-y|^{s(x)(n-1)}  &< \diam(D)^{(s(x)-1)(n-1)} |x-y|^{n-1}\\ 
&\le (1+\diam(D))^{(\sup s-1)(n-1)} |x-y|^{n-1}\\ 
&\le (1+2\beta)^{(\sup s-1)(n-1)} |x-y|^{n-1}, 
\end{split}
\] 
which gives the claim.

Let us then assume that  $x\in D\setminus B(x_0, \dist(x_0, \partial D))$ and let 
$(B_i)_{i=0}^{\infty}$ be a sequence of balls constructed in Lemma~\ref{lem:pallot}. Property (2) gives that $\dist(x, B_i) \to 0$ as $i \to \infty$. Thus, property (2)  and the Lebesgue differentiation theorem \cite[Section 1, Corollary 1]{Stein} imply that 
$u_{B_i} \to u(x)$ when  $i\to\infty$
 for almost every $x$. We obtain
\[
\begin{split}
|u(x) - u_{B_0}| &\leq \sum_{i=0}^\infty |u_{B_i} - u_{B_{i+1}}| \\
&\leq \sum_{i=0}^\infty \left (| u_{B_i} - u_{B_i \cap B_{i+1}}|  + |u_{B_{i+1}} - u_{B_i \cap B_{i+1}}| \right)\\
&\leq  \sum_{i=0}^\infty \left (\vint_{B_i \cap B_{i+1}}  |u(y)- u_{B_i} | \, dy  + \vint_{B_i \cap B_{i+1}} |u(y) - u_{B_{i+1}}|\, dy \right)\,.\\
\end{split}
\]
By property (4) 
\[
|u(x) - u_{B_0}| 
\leq  2C\sum_{i=0}^\infty \vint_{B_i}  |u(y)- u_{B_i} | \, dy. 
\]
Using the $(1,1)$-Poincar\'e inequality in a ball $B_i$, \cite[Section 7.8]{Gilbarg-Trudinger}, we obtain
\[
\begin{split}
|u(x) - u_{B_0}|
&\leq  C\sum_{i=0}^\infty r_i \vint_{B_i}  |\nabla u(y)| \, dy. \\
\end{split}
\]

Thus, for each $z \in B_i$ we obtain by property (2) that
\[
|x-z| \le \dist(x, B_i) + 2r_i \le (C r_i)^{\frac1{s(x)}} + 2r_i
\le  C r_i^{\frac1{s(x)}},
\]
where in the last inequality we used that $D$ is bounded.
Hence, we have $ C |x-z|^{s(x)} \le  r_i$.
Using this we obtain by property (3) that
\[
\begin{split}
|u(x) - u_{B_0}|
&\leq  C\sum_{i=0}^\infty r_i \vint_{B_i}  |\nabla u(y)| \, dy 
\leq C\sum_{i=0}^\infty \int_{B_i}  \frac{|\nabla u(y)|}{r_i^{n-1}} \, dy \\
&\leq  C\sum_{i=0}^\infty  \int_{B_i}  \frac{|\nabla u(y)|}{|x-y|^{s(x)(n-1)}} \, dy
\leq  C  \int_{D}  \frac{|\nabla u (y)|}{|x-y|^{s(x)(n-1)}} \, dy. \qedhere
\end{split}
\]
\end{proof}

Now Lemma~\ref{lem:Samko} and Remark~\ref{RieszNewNotation} yield  the following theorem, where we do not need continuity of $s$.
Note that $\alpha^->0$ if and only if $s^+< \frac{n}{n-1}$ and $(\alpha p)^+ =(n-s^-(n-1))p <n$ if and only if $p < \frac{n}{n-s^-(n-1)}$. 

 \begin{theorem}[Sobolev-Poincar\'e inequality in the case $p>1$]\label{thm:main-p>1}
 Let $D\subset \Rn\,, n\geq 2\,,$ be an  $s(\cdot)$-John domain. Assume that 
 \begin{equation}\label{equ:rajat-s}
 1 \le s^-  \le s^+ < \frac{n}{n-1} \quad \text{ and } \quad 1<p<\frac{n}{n-s^-(n-1)}.
 \end{equation} 
 Then there exists a constant $c$ such that 
 \[
 \|u - u_B\|_{L^{q(\cdot)}(D)} \le c \|\nabla u\|_{L^p(D)},
 \]
 for all $u\in L^1_p(D)$.
 Here $q(x) := \frac{np}{pn(s(x)-1)+n -s(x) p}$
 and $B:= B(x_0, \dist(x_0, \partial D))$.  The constant $c$ depends only on the dimension $n$, $\alpha$, $\beta$, $s^+$, $\dist(x_0, \partial D)$ and $\log$-Hölder constant of $p$.
  \end{theorem}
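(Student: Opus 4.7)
The plan is to combine the pointwise estimate of Lemma~\ref{lem:p-w-Riesz} with Samko's mapping property from Lemma~\ref{lem:Samko}, using the change of variables recorded in Remark~\ref{RieszNewNotation}. Essentially all the analytic work has already been done in those preliminaries, so the proof reduces to a verification that the hypotheses and exponents line up.

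First, I would fix $u \in L^1_p(D)$ and apply Lemma~\ref{lem:p-w-Riesz} to obtain the pointwise bound
\[
|u(x) - u_B| \le c\, \It_{s(\cdot)}|\nabla u|(x) \quad \text{for a.e.\ } x \in D,
\]
where $B = B(x_0, \dist(x_0, \partial D))$. Setting $\alpha(x) := n - s(x)(n-1)$, Remark~\ref{RieszNewNotation} identifies $\It_{s(\cdot)}$ with $I_{\alpha(\cdot)}$, so the inequality reads $|u(x) - u_B| \le c\, I_{\alpha(\cdot)}|\nabla u|(x)$.

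Next I would check that the pair $(p,\alpha)$ satisfies the hypotheses of Lemma~\ref{lem:Samko}. Since $p$ is a constant with $p > 1$, it is trivially a $\log$-Hölder continuous variable exponent with $1 < p^- = p^+ = p < \infty$. The assumption $s^+ < \frac{n}{n-1}$ translates via Remark~\ref{RieszNewNotation} to $\alpha^- = n - s^+(n-1) > 0$, and $p < \frac{n}{n - s^-(n-1)}$ translates to $(\alpha p)^+ = (n - s^-(n-1))p < n$. Crucially, Lemma~\ref{lem:Samko} does not require $\alpha$ itself to be $\log$-Hölder continuous, which is why continuity of $s$ is unnecessary here. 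Applying Lemma~\ref{lem:Samko} yields that $I_{\alpha(\cdot)} : L^p(D) \to L^{p^\#_\alpha(\cdot)}(D)$ is bounded, and the computation in Remark~\ref{RieszNewNotation} gives
\[
p^\#_\alpha(x) = \frac{np}{n - \alpha(x)p} = \frac{np}{pn(s(x)-1) + n - s(x)p} = q(x).
\]

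Finally, taking the $L^{q(\cdot)}(D)$-norm of both sides of the pointwise estimate and invoking the boundedness just established gives $\|u - u_B\|_{L^{q(\cdot)}(D)} \le c\,\|\nabla u\|_{L^p(D)}$, with the stated dependence of $c$: the dependence on $n, \alpha, \beta, s^+, \dist(x_0,\partial D)$ comes from Lemma~\ref{lem:p-w-Riesz}, while the Samko step depends only on $n$, $p^\pm$, $\alpha^\pm$, $(\alpha p)^+$, i.e.\ on $n$, $p$, and $s^\pm$. The main obstacle is essentially bookkeeping: matching the two parameterisations $(s^-,s^+) \leftrightarrow (\alpha^-,\alpha^+)$ and verifying $p^\#_\alpha = q$; there is no genuine analytic difficulty at this stage because Lemmas~\ref{lem:p-w-Riesz} and~\ref{lem:Samko} already carry the weight of the argument.
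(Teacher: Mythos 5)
Your proposal is correct and follows exactly the paper's own route: the pointwise bound of Lemma~\ref{lem:p-w-Riesz}, the identification $\It_{s(\cdot)} = I_{\alpha(\cdot)}$ with $\alpha(x) = n - s(x)(n-1)$ from Remark~\ref{RieszNewNotation}, and Samko's boundedness result (Lemma~\ref{lem:Samko}) applied with the constant exponent $p$, including the observation that no continuity of $s$ is required. The paper's proof is just a terser version of the same verification, so there is nothing to add.
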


\begin{proof}
By Lemma~\ref{lem:p-w-Riesz} we obtain
\[
 \big|u(x) - u_{B(x_0, \dist(x_0, \partial D)} \big| \le c \It_{s(\cdot)} |\nabla u|(x).
 \] 
 Note that \eqref{equ:rajat-s} yields assumption of Lemma~\ref{lem:Samko} by Remark~\ref{RieszNewNotation}.
Thus  we obtain  that
 \[
 \|u - u_{B(x_0, \dist(x_0, \partial D)}\|_{L^{q(\cdot)}(D)} \le c \|\nabla u\|_{L^p(D)}. \qedhere
 \]
\end{proof}

In the case $p=1$ we use Theorem~\ref{hteJ} and hence we need to assume that $s$ is $\log$-Hölder continuous.

\begin{theorem} [Sobolev-Poincar\'e inequality in the case $p=1$] \label{thm:main-p=1}
 Let $D\subset \Rn\,, n\geq 2\,,$ be an $s(\cdot)$-John domain. Assume that $s \in \PPln(\Omega)$ satisfies 
 $1\le s^-  \le s^+ < \frac{n}{n-1}$. Then there exists a constant $c$ such that 
 \[
 \|u - u_B\|_{L^{q(\cdot)}(D)} \le c \|\nabla u\|_{L^1(D)},
 \]
for all $u \in L^1_1(D)$, where $q(x) := \frac{n}{s(x)(n-1)}$,
 and $B:= B(x_0, \dist(x_0, \partial D))$. 
 The constant $c$ depends only on the dimension $n$, $\alpha$, $\beta$, $s^+$, $\dist(x_0, \partial D)$, and the $\log$-Hölder constant of $\alpha$.
\end{theorem}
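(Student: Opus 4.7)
The argument follows the template of Theorem~\ref{thm:main-p>1}: the pointwise Riesz bound of Lemma~\ref{lem:p-w-Riesz} reduces matters to an integrability statement for $I_{\alpha(\cdot)}$. Since the strong-type Lemma~\ref{lem:Samko} is not available when $p=1$, we replace it by the weak-type modular estimate of Theorem~\ref{hteJ}, and then use a Maz'ya-type truncation to pass from the resulting weak-type modular bound to the strong Luxemburg-norm bound.

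\emph{Step 1 (Riesz reduction and weak-type).} By Lemma~\ref{lem:p-w-Riesz} we have $|u(x) - u_B| \le c\,\tilde I_{s(\cdot)}|\nabla u|(x) = c\, I_{\alpha(\cdot)}|\nabla u|(x)$ with $\alpha(x) := n - s(x)(n-1)$. Remark~\ref{RieszNewNotation} converts the hypotheses $s \in \PPln(D)$, $1 \le s^- \le s^+ < n/(n-1)$, into $\alpha \in \PPln(D)$, $0 < \alpha^- \le \alpha^+ < n$, with $p^\#_\alpha(x) = q(x)$ when $p\equiv 1$. By the linear homogeneity of both sides of the claim we may assume $\|\nabla u\|_{L^1(D)} \le 1$; applying Theorem~\ref{hteJ} with $f = |\nabla u|$ and $p\equiv 1$ then yields
\[
\int_{\{|u - u_B| > t\}} t^{q(x)}\,dx \;\le\; c\bigl(\|\nabla u\|_{L^1(D)} + |\{0 < |\nabla u| \le 1\}|\bigr) \;\le\; c(1+|D|),\qquad t > 0.
\]

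\emph{Step 2 (Maz'ya truncation).} Write $w := u - u_B$ and, for $k\in\Z$, set $v_k := \min(|w|, 2^{k+1}) - \min(|w|, 2^{k})$. Then $v_k\in L^1_1(D)$, $0\le v_k\le 2^k$, $|\nabla v_k| = |\nabla u|\chi_{A_k}$ on the pairwise disjoint sets $A_k := \{2^k < |w| \le 2^{k+1}\}$, and $\sum_k \|\nabla v_k\|_{L^1(D)} \le \|\nabla u\|_{L^1(D)}$. Applying the argument of Step~1 to $v_k$ in place of $u - u_B$ yields
\[
\int_{\{|v_k - (v_k)_B| > 2^{k-1}\}} (2^{k-1})^{q(x)}\,dx \;\le\; c\bigl(\|\nabla u\|_{L^1(A_k)} + |A_k|\bigr).
\]
Since $w$ has mean zero on $B$, Chebyshev's inequality combined with the classical $(1,1)$-Poincar\'e inequality on $B$ gives $|B\cap\{|w|>2^k\}|\lesssim 2^{-k}$, whence $|(v_k)_B|\le c/|B|$ uniformly in $k$. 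Thus for all $k\ge k_0$, with $k_0$ depending only on $|B|$, one has $|(v_k)_B|\le 2^{k-2}$ and therefore $\{|w|>2^{k+1}\}\subset\{|v_k-(v_k)_B|>2^{k-1}\}$. Summing dyadically in $k\ge k_0$, and estimating the contribution of $\{|w|\le 2^{k_0+1}\}$ trivially by $2^{(k_0+1)q^+}|D|$, gives
\[
\int_D |u - u_B|^{q(x)}\,dx \;\le\; c\bigl(\|\nabla u\|_{L^1(D)} + |D|\bigr).
\]
Undoing the normalisation $\|\nabla u\|_{L^1(D)}=1$ and using $q^-\ge n/((n-1)s^+)>1$ together with $\sum_j a_j^{q(x)}\le(\sum_j a_j)^{q(x)}$ yields the Luxemburg-norm bound $\|u-u_B\|_{L^{q(\cdot)}(D)}\le c\,\|\nabla u\|_{L^1(D)}$.

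\emph{Main obstacle.} The chief technical point is the control of the centring $(v_k)_B$: the classical Maz'ya trick applied to centred differences on a general domain requires $(v_k)_B$ to be a small fraction of $2^k$, which here is ensured by Chebyshev's inequality applied to $w=u-u_B$ on the ball $B$. The small-$k$ truncations are absorbed into a constant depending only on $|D|$ and $|B|$, and the variable exponent $q(x)$ in the dyadic summation is dealt with via the uniform bound $q^->1$ coming from $s^+<n/(n-1)$. The log-Hölder hypothesis on $s$ enters exclusively through the application of Theorem~\ref{hteJ}.
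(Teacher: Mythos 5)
Your proposal is correct and follows essentially the same route as the paper: the pointwise bound of Lemma~\ref{lem:p-w-Riesz}, the dyadic (Maz'ya-type) truncations $v_k$ (identical to the paper's $\max\{0,\min\{|u-u_B|-2^k,2^k\}\}$), control of the centring term $(v_k)_B$ via the $(1,1)$-Poincar\'e inequality on the central ball $B$, the weak-type modular estimate of Theorem~\ref{hteJ} applied to $|\nabla v_k|$, a trivial absorption of the low dyadic levels into a constant times $|D|$, and the final modular-to-norm plus scaling argument. The only differences are cosmetic (your Chebyshev detour for $(v_k)_B$ and the explicit cutoff $k_0$ in place of the paper's $j_0$ with $c_1>2^{j_0-2}$).
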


\begin{proof}
 Let us first assume that $\|\nabla u\|_{1} \leqslant 1$. We
  show that $\varrho_{L^{q(\cdot)}(D)}(|u-u_B|)$
  is uniformly bounded.  For every  $j \in \Z$ we set 
 \[
 D_j :=\{x\in D: 2^j < |u(x)-u_B| \leqslant 2^{j+1}\}
 \text{ and } v_j
  := \max\big\{0, \min\{|u- u_B| -2^j, 2^j\}\big\}.
  \]
   From 
  the lattice property of Sobolev functions it follows ${v_j\in
  L^{1}_1(D)}$. By Lemma~\ref{lem:p-w-Riesz} we have
  \begin{equation*}
    |v_j(x)- (v_j)_{B}| 
    \leqslant  c\, \It_{s(\cdot)} |\nabla v_j| (x)
  \end{equation*}
  for almost every $x\in D$. 
  
   We obtain by the pointwise inequality $v_j
  \leq |u-u_B|$ and by the  Poincar{\'e} inequality in a ball
  that
  \begin{align}\label{S-PineqW:equ2}
    \begin{aligned}
    \hspace*{-4mm}  v_j(x) & \leqslant |v_j(x)-(v_j)_{B}| + (v_j)_{B} \leqslant c \,
      \It_\s |\nabla v_j| (x) + \vint_B |u-u_B| \,dx
      \\
      &\leqslant c\, \It_\s |\nabla v_j| (x)+ c\!  \diam(B) \vint_B  |\nabla u
      |\,dx
      \\
      &\leqslant c \, \It_\s |\nabla v_j| (x)+ c \frac{\diam(B)}{|B|} \leqslant c_1 \,(\It_\s |\nabla v_j| (x)+ 1),
    \end{aligned}
  \end{align}
  where in the second to last inequality we used that $\|\nabla u\|_{1} \leqslant 1$, and in the last inequality we note that $\diam(B)$ and $|B|$ both depends only on the distance from the John center to the boundary.
  For the rest of this proof we fix the constant $c_1$ to denote the
  constant on the last line.  It depends only on $n$,  $\alpha$, $\beta$, $s^+$ and $\dist(x_0, \partial D)$.

  Using the definition of $D_j$ we get
  \begin{align*}
    \int_D |u(x)-u_B|^{q(x)} \,dx &= \sum_{j=-\infty}^\infty~ \int_{D_j} |u(x)-u_B|^{q(x)} \,dx
\leqslant \sum_{j=-\infty}^\infty~ \int_{D_j} 2^{(j+1)q(x)}
    \,dx\,.
  \end{align*}
  For every $x\in D_{j+1}$ we have $v_j (x)=2^{j}$ and thus
  obtain by \eqref{S-PineqW:equ2} the pointwise inequality $c_1 \It_\s
  |\nabla v_j| (x)+ c_1 > 2^{j}$ for almost every $x\in D_{j+1}$.
  Note that if $a+b>c$, then $a>\frac12 c$ or $b> \frac 12 c$. Thus 
  \begin{equation*}
    \begin{aligned}
      &\sum_{j=-\infty}^\infty~ \int_{D_j} 2^{(j+1)q(x)} \,dx 
\leqslant \sum_{j=-\infty}^\infty~ \int_{\{x\in D_j \colon
        c_1 \It_\s |\nabla v_j| (x)+ c_1> 2^{j-1}\}}
      \hspace*{-10mm}2^{(j+1)q(x)} \,dx
      \\
      &\quad \leqslant \sum_{j=-\infty}^\infty~ \int_{\{x\in D \colon
        c_1 \It_\s |\nabla v_j| (x)>2^{j-2}\}}\hspace*{-6mm}
      2^{(j+1)q(x)} \,dx + \sum_{j=-\infty}^\infty~ \int_{\{x\in
        D_{j} \colon c_1 > 2^{j-2}\}}\hspace*{-6mm} 2^{(j+1)
        q(x)} \,dx.
    \end{aligned}
  \end{equation*}
  Since $\| \nabla u\Vert_{1} \leqslant 1$, we
  obtain by Theorem~\ref{hteJ} and Remark~\ref{RieszNewNotation} for the first term on the right-hand
  side that
  \begin{equation*}
    \begin{aligned}
      \sum_{j=-\infty}^\infty~ &\int_{\{x\in D \colon c_1 \It_\s |\nabla
        v_j(y)| (x)> 2^{j-2}\}} 2^{(j+1)q(x)} \,dx
      \\
      &\le 2^{3 n}\sum_{j=-\infty}^\infty~ \int_{\{x\in D \colon c_1 \It_\s |\nabla
        v_j(y)| (x)> 2^{j-2}\}} 2^{(j-2)q(x)} \,dx
      \\
      &\leqslant c\, \sum_{j=-\infty}^\infty\bigg( \int_{D} |\nabla
      v_j(y)| \,dy + \big|\{ 0<|\nabla v_j| \leq 1\}\big| \bigg)
      \\
      &\leqslant c \,\sum_{j=-\infty}^\infty\bigg ( \int_{D_j}
      |\nabla u| \,dy + |D_j| \bigg)
= c \,\int_{D} |\nabla u| \,dy + c\,
      |D|.
    \end{aligned}
  \end{equation*}
  Let $j_0$ be the largest integer satisfying $c_1 > 2^{j_0-2}$. Hence
  \[
  \sum_{j=-\infty}^\infty \int_{\{x\in D_{j}: c_1 > 2^{j-2}\}} 2^{(j+1)
    q(x)} \,dx \leq \int_D \sum_{j=-\infty}^{j_0} 2^{(j+1)
    q(x)} \,dx \le c \, |D|.
  \]
  
  Then we conclude the proof by the scaling argument: Since $\|u- u_B\|_{q(\cdot)} \le c$ for all $\|\nabla u\|_1\le 1$, 
  we obtain the claim by applying this to $u/\|\nabla u \|_1$. 
\end{proof}

\begin{remark}
\begin{enumerate}
\item  If  $s\equiv 1$ is chosen, then  by Theorems \ref{thm:main-p>1} and \ref{thm:main-p=1} the classical Sobolev-Poincar\'e inequality is recovered  for $1$-John domains, \cite{Bojarski1988}.
\item Let  $s$ be a constant function and $1< s<n/(n-1)$.  The target spaces $L^{\frac{n}{s(n-1)}}(D)$ in Theorem \ref{thm:main-p=1} 
is optimal, while the target space $L^{\frac{np}{n - sp + pn(s-1)}}(D)$ in Theorem \ref{thm:main-p>1} is not the best possible, see \cite{HajK98, KilM00}. 
\item  Let  $s$ be a constant function and $1\le s<n/(n-1)$,  then the classical $(1,1)$-Poincar\'e inequality in an $s$-John domain is recovered.
And this yields the $(p,p)$-Poincar\'e inequality for all $1< p <\infty$. Recall that
it has been known that the $(p,p)$-Poincar\'e inequality holds for all $p\in [1,\infty )$ whenever
$1\le s < n/(n-1)$, \cite{SmiS90}.
\end{enumerate}
\end{remark}

 Let us recall the following result,  Theorem \ref{Israel} for bounded John domains.
 The result was proved for domains with a fixed cone condition by N. S. Trudinger \cite{Tru1967}.
 Domains with a fixed cone condition are examples of John domains. But John domains form a strictly larger class of domains than domains with a fixed cone condition.
 
 \begin{theorem}\cite{EH-S2001}\label{Israel}
  Let $D$ be a  $1$-John domain in $\R^n$, $n\geq 2$.
  There exists a constant $a>0$ such that
 \[
\int_D \exp\bigg( a\frac{ |u-u_D|}{
\vert\vert \nabla u\vert\vert _{L^n(D)}^n}\bigg)
^{\frac{n}{n-1}}
\,dx <\infty
\]
 for all $u\in L^1_n(D)$.
 \end{theorem}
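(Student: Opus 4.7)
The plan is the classical Moser--Trudinger scheme, adapted to a $1$-John domain through the Riesz potential representation furnished by earlier results in the paper.

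\textbf{Reduction.} By homogeneity it is enough to prove the bound under the normalisation $\|\nabla u\|_{L^n(D)}=1$. Theorem~\ref{thm:main-p=1} applied with $s\equiv 1$ gives $\|u-u_B\|_{L^{n/(n-1)}(D)}\leq c\,\|\nabla u\|_{L^1(D)}\leq c\,|D|^{(n-1)/n}$ where $B:=B(x_0,\dist(x_0,\partial D))$, and H\"older's inequality then yields $|u_D-u_B|\leq c$. Replacing $u_D$ by $u_B$ therefore only shifts the argument of the exponential by a bounded constant and changes the integral by a bounded factor, so it suffices to prove
\[
\int_D \exp\!\bigl(b\,|u-u_B|^{n/(n-1)}\bigr)\,dx \leq C
\]
for $u$ with $\|\nabla u\|_{L^n(D)}=1$ and some small $b>0$.

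\textbf{Pointwise Riesz bound.} For $s\equiv 1$ the operator $\It_{s(\cdot)}$ from Remark~\ref{RieszNewNotation} coincides with the classical Riesz potential $I_1$, so Lemma~\ref{lem:p-w-Riesz} gives
\[
|u(x)-u_B|\leq c\,I_1|\nabla u|(x)\qquad\text{a.e.\ in }D.
\]
Setting $f:=|\nabla u|$ reduces the theorem to bounding
$\int_D \exp\!\bigl(b\,(I_1 f)^{n/(n-1)}\bigr)\,dx$
uniformly over $f\in L^n(D)$ with $\|f\|_{L^n(D)}=1$.

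\textbf{Sharp $L^q$ growth of $I_1$.} The analytic heart of the argument is
\[
\|I_1 f\|_{L^q(D)} \leq C_n\,q^{(n-1)/n}\,\|f\|_{L^n(D)},\qquad q\geq n.
\]
This is obtained from the Hedberg-style splitting of Lemma~\ref{lem:rieszvsM2} applied at a constant exponent $p=p(q)<n$ chosen so that $p^{\#}_1=np/(n-p)=q$: the near part contributes $r\cdot \M f$, the far part contributes $\max\{1,p/(n-p)\}^{(p-1)/p}\,r^{-(n-p)/p}$, optimising in $r$ balances the two, and substituting $n-p=np/q$ turns $\max\{1,p/(n-p)\}^{(p-1)/p}$ into a factor of order $q^{(n-1)/n}$; the embedding $\|f\|_{L^p(D)}\leq |D|^{1/p-1/n}\|f\|_{L^n(D)}$ (valid since $D$ is bounded) converts the $L^p$ data into the desired $L^n$ data.

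\textbf{Series summation.} Taylor-expanding and applying the $L^{q_k}$-bound at $q_k:=kn/(n-1)$,
\begin{align*}
\int_D \exp\!\bigl(b\,(I_1 f)^{n/(n-1)}\bigr)\,dx
&=\sum_{k=0}^\infty\frac{b^k}{k!}\int_D (I_1 f)^{q_k}\,dx\\
&\leq \sum_{k=0}^\infty\frac{b^k}{k!}\bigl(C_n q_k^{(n-1)/n}\bigr)^{q_k}
=\sum_{k=0}^\infty\frac{b^k C_n^{q_k}}{k!}\Bigl(\tfrac{kn}{n-1}\Bigr)^{k},
\end{align*}
since $q_k\cdot(n-1)/n=k$. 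Stirling's bound $k!\geq (k/e)^k$ reduces the general term to $\bigl(b\,e\,C_n^{n/(n-1)}\,n/(n-1)\bigr)^k$, and the geometric series converges for $b$ sufficiently small, yielding the required uniform bound.

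\textbf{Main obstacle.} The decisive step is obtaining the correct $q^{(n-1)/n}$ growth in the $L^q$ estimate for $I_1 f$; a coarser rate such as linear growth in $q$ would break the Stirling-based summation. Everything else is a mechanical combination of the chaining Lemma~\ref{lem:p-w-Riesz} with the Taylor expansion, but keeping the constants in Lemma~\ref{lem:rieszvsM2} sharp as $p\to n$ requires careful bookkeeping.
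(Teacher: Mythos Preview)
The paper does not give its own proof of Theorem~\ref{Israel}; it is quoted from \cite{EH-S2001} as background, and the paper instead proves the stronger Theorem~\ref{thm:main-p=n}. Your argument is the classical Hedberg--Trudinger route and is essentially correct: the pointwise Riesz bound is exactly Lemma~\ref{lem:p-w-Riesz} with $s\equiv 1$; the $q^{(n-1)/n}$ growth of $\|I_1 f\|_{L^q}$ does follow from the constant-exponent case of Lemma~\ref{lem:rieszvsM2} with $p=nq/(n+q)$ (the constant there depends only on $n$, $\diam(\Omega)$ and the $\log$-H\"older constant of $p$, hence is uniform for constant $p$); and the Stirling summation is standard. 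The only point to watch is that Lemma~\ref{lem:rieszvsM2} is stated under $\|f\|_{\p}\le 1$, so you must first normalise by $|D|^{1/p-1/n}$ before invoking it and then track that harmless factor through the optimisation.

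It is worth comparing with the paper's proof of the generalisation, Theorem~\ref{thm:main-p=n}, specialised to $s\equiv 1$. That proof shares the first step (Lemma~\ref{lem:p-w-Riesz}) with yours but then diverges: instead of expanding the exponential as a power series and summing $L^q$ bounds, it invokes the level-set estimate of Theorem~\ref{thm:Riesz-limit-case}, which gives $\Ha^{n-1}_\infty(\{I_1 f>t\})\le c_1\exp(-c_2 t^{n/(n-1)})$, and then integrates via the layer-cake formula \eqref{IntegralDef}. That approach buys the stronger conclusion---integrability against the Hausdorff content $\Ha^{n-1}_\infty$ rather than Lebesgue measure---which your series-summation argument cannot reach directly, since the $L^q$ bounds on $I_1 f$ only see Lebesgue integrals. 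Conversely, your route is more elementary and avoids the machinery of Section~4 entirely.
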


Next  we  improve Theorem \ref{Israel}.
 In the next theorem the Hausdorff content is sharper than the Lebesgue measure in the following sense:   the equation  $|\{ x \in D : a |u-u_B|^{\frac{n}{s(n-1)}} >t \}|=0$ does not imply  
  the equation 
 $\Ha^{s(n-1)}_\infty\Big(\{ x \in D : a |u-u_B|^{\frac{n}{s(n-1)}} >t \} \Big)=0$, but the latter equation  implies the former one.

 When the integration is with respect to the Hausdorff content, the integration  is taken as a Choque integral, \cite{Ad1998}.
Let us define that 
\begin{equation}\label{IntegralDef}
\int_D |u| \, d \Ha^{n-\alpha(\cdot)}_\infty := \int_0^\infty \Ha^{n-\alpha(\cdot)}_\infty\big(\{x \in D : |u(x)|>t\}\big) \, dt. 
\end{equation}
Note  that $\Ha^{n-\alpha(\cdot)}_\infty$ is monotone by Lemma~\ref{lem:H} i.e. if $A\subset B$ then $\Ha^{n-\alpha(\cdot)}_\infty(A) \le \Ha^{n-\alpha(\cdot)}_\infty(B)$. Hence the function $t \mapsto \Ha^{n-\alpha(\cdot)}_\infty\big(\{x \in D : |u(x)|>t\}\big)$ is a decreasing function $\R \to [0, \infty)$ for every  $u:D \to \R$. By the decreasing property the function $t \mapsto \Ha^{n-\alpha(\cdot)}_\infty\big(\{x \in D : |u(x)|>t\}\big)$ is measurable. Thus, $\int_0^\infty \Ha^{n-\alpha(\cdot)}_\infty\big(\{x \in D : |u(x)|>t\}\big) \, dt$ is well-defined as a Lebesgue integral.
 For the notion of Choquet integral in applications we refer to \cite{Ad1998}.

\begin{theorem} [Sobolev-Poincar\'e inequality in the limit case ] \label{thm:main-p=n}
 Let $D$ be an  $s(\cdot)$-John domain in $\R^n$, $n\geq 2$.  Assume that $s \in \PPln(D)$ satisfies 
 $1\le s^-  \le s^+ < \frac{n}{n-1}$. When  $\alpha(x) := n-s(x)(n-1)$, then there exist positive constants $a$ and $b>0$ such that
 \[
\int_D \exp\big( a |u-u_B|^{\frac{n}{s^+(n-1)}} \big) \, d \Ha^{s(\cdot)(n-1)}_\infty \le b
\]
 for all $u \in L^{1}_{n/\alpha(\cdot)}(D)$ with $\|\nabla u\|_{L^{\frac{n}{\alpha(\cdot)}}(D)} \le 1$, here 
 $B:= B(x_0, \dist(x_0, \partial D))$.
\end{theorem}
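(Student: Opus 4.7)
The plan is to combine the pointwise Riesz representation from Lemma~\ref{lem:p-w-Riesz} with the exponential weak-type estimate in Theorem~\ref{thm:Riesz-limit-case}, and then unwind the Choquet integral definition \eqref{IntegralDef}.

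First, by Lemma~\ref{lem:p-w-Riesz} and Remark~\ref{RieszNewNotation}, for almost every $x\in D$ one has
\[
|u(x)-u_B|\le C\,\It_{s(\cdot)}|\nabla u|(x)=C\,I_{\alpha(\cdot)}|\nabla u|(x),
\]
where $\alpha(x):=n-s(x)(n-1)$. Observe that $n-\alpha(x)=s(x)(n-1)$, so the Hausdorff content appearing in Theorem~\ref{thm:Riesz-limit-case} is exactly $\Ha_\infty^{s(\cdot)(n-1)}$, as required by the statement. Moreover $\alpha^-=n-s^+(n-1)>0$ by hypothesis, hence $n/(n-\alpha^-)=n/(s^+(n-1))$, which is precisely the exponent occurring in the target integral.

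Next, I would normalize so that Theorem~\ref{thm:Riesz-limit-case} applies. Setting $\mu:=2C(1+|D|)$ and $f:=|\nabla u|/\mu$, one has $\|f\|_{L^{n/\alpha(\cdot)}(D)}\le 1/(2(1+|D|))$, so by linearity of $I_{\alpha(\cdot)}$ and the pointwise bound above, for every $r>0$,
\[
\Ha_\infty^{s(\cdot)(n-1)}\bigl(\{x\in D:|u(x)-u_B|>r\}\bigr)
\le c_1\exp\bigl(-c_3\,r^{n/(s^+(n-1))}\bigr),
\]
where $c_3=c_2\,\mu^{-n/(s^+(n-1))}$ depends only on the admissible parameters $(n,\alpha,\beta,s^+,|D|,\dist(x_0,\partial D),\text{log-H\"older constant of }s)$.

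Third, I would unwind the Choquet integral. By \eqref{IntegralDef},
\[
\int_D\exp\!\bigl(a|u-u_B|^{n/(s^+(n-1))}\bigr)\,d\Ha_\infty^{s(\cdot)(n-1)}
=\int_0^\infty \Ha_\infty^{s(\cdot)(n-1)}\bigl(\{x\in D:\exp(a|u-u_B|^{n/(s^+(n-1))})>t\}\bigr)\,dt.
\]
I would split this at $t=1$. The piece over $[0,1]$ is bounded by $\Ha_\infty^{s(\cdot)(n-1)}(D)\le(1+\diam D)^n$. For $t>1$ the level set equals $\{|u-u_B|>(\log t/a)^{(s^+(n-1))/n}\}$, so the previous estimate yields an integrand bounded by $c_1\,t^{-c_3/a}$, which is integrable on $[1,\infty)$ as soon as $a<c_3$. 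Fixing any such $a$ gives the desired $b$.

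The main obstacle is the bookkeeping of constants: one must verify that the scaling by $\mu=2C(1+|D|)$ does not damage the exponent, that the resulting $c_3$ is controlled by the allowed parameters, and that the choice $a<c_3$ is independent of $u$. Once $a$ is fixed strictly below the critical value $c_3$, the geometric tail $\int_1^\infty t^{-c_3/a}\,dt$ converges and yields the bound $b$.
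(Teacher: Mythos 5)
Your proposal is correct and takes essentially the same route as the paper: the pointwise bound of Lemma~\ref{lem:p-w-Riesz} (via Remark~\ref{RieszNewNotation}), normalization of $|\nabla u|$ so that Theorem~\ref{thm:Riesz-limit-case} applies, and then the Choquet integral \eqref{IntegralDef} split at $t=1$ with the tail bounded by $c_1 t^{-c_3/a}$ and $a$ chosen below $c_3$. Only trivial bookkeeping differs from the paper (which fixes a small parameter $m$ instead of choosing $a<c_3$ directly); note merely that with your normalization $c_3$ should be $c_2\,(C\mu)^{-n/(s^+(n-1))}$ rather than $c_2\,\mu^{-n/(s^+(n-1))}$, and one takes $C\ge 1$ so that indeed $\|f\|_{L^{n/\alpha(\cdot)}(D)}\le \tfrac{1}{2(1+|D|)}$.
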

 
 Theorem \ref{thm:main-p=n} yields Corollary~\ref{cor:main} as a special case when $s$ is a  constant function.
 
 \begin{proof}[Proof of Theorem \ref{thm:main-p=n}]
Let $m >0$ be a constant that  we will fix later.  
 By Lemma~\ref{lem:p-w-Riesz} there exists a constant $c_1$ such that
 \[
 \begin{split}
 &\int_D \exp\bigg(\bigg (\frac{m}{2(1+|D|)} \vert u - u_B\vert  \bigg)^{\frac{n}{s^+(n-1)}}\bigg) \, d \Ha^{s(\cdot)(n-1)}_\infty \\
 &\quad\le   \int_D \exp\bigg( \bigg(\frac{m c_1}{2(1+|D| )} \tilde I_{\s} |\nabla u| \bigg)^{\frac{n}{s^+(n-1)}} \bigg) \, d \Ha^{s(\cdot)(n-1)}_\infty \,.
 \end{split}
 \]
 By using the definition \eqref{IntegralDef} and splitting the interval of integration to two parts we obtain
 \[
 \begin{split}
 &\int_D \exp\bigg(\bigg (\frac{m}{2(1+|D|)} \vert u - u_B\vert  \bigg)^{\frac{n}{s^+(n-1)}}\bigg) \, d \Ha^{s(\cdot)(n-1)}_\infty \\
 &\quad\le   \int_0^\infty \Ha^{s(\cdot)(n-1)}_\infty\Bigg(\bigg\{x \in D : \exp\bigg( \bigg(\frac{m c_1}{2(1+|D|)} \tilde I_{\s} |\nabla u|(x) \bigg)^{\frac{n}{s^+(n-1)}} >t \bigg\} \Bigg) \, dt\\
 &\quad\le \int_0^1 \Ha^{s(\cdot)(n-1)}_\infty\Bigg(\bigg\{x \in D : \bigg(\frac{\tilde I_{\s} |\nabla u|}{2(1+|D|)|} \bigg)^{\frac{n}{s^+(n-1)}}> \frac{\log (t)}{(mc_1)^{n/(s^+(n-1))}}\bigg\}\Bigg) \, dt\\
&\qquad +  \int_1^\infty \Ha^{s(\cdot)(n-1)}_\infty \Bigg(\bigg\{x \in D : \bigg(\frac{\tilde I_{\s} |\nabla u|}{2(1+|D|)|}\bigg)^{\frac{n}{s^+(n-1)}} > \frac{\log (t)}{(mc_1)^{{n}/(s^+(n-1))}}\bigg\}\Bigg)\, dt\,.
\end{split}
\]
The integral over the  unit interval is estimated by
$ \Ha^{n-\alpha(\cdot)}_\infty(D)$.
We estimate the second integral over the unbounded interval. 
Let us first note that $\Big\| \frac{ |\nabla u|}{2(1+| D|)} \Big\|_{L^{\frac{n}{\alpha(\cdot)}}(D)} \le \frac{1}{2(1+|D|)}$.
By  Remark~\ref{RieszNewNotation} we have  $\tilde I_{\s}= I_{\alpha(\cdot)}$ where $\alpha(x) = n- s(x) (n-1)$.  The condition $\alpha^->0$ holds if and only if $s^+< \frac{n}{n-1}$ and
$\alpha^+ \le 1<n$  since $s^- \ge 1$.
Moreover $\frac{n}{n-\alpha^-}= \frac{n}{s^+(n-1)}$.
By Theorem~\ref{thm:Riesz-limit-case}  there exist constants $c_2$ and $c_3$ such that
\[
\begin{split}
 &\Ha^{n-\alpha(\cdot)}_\infty \Bigg(\bigg\{x \in D : \tilde I_{\s} \bigg(\frac{ |\nabla u|}{2(1+|D|)}\bigg) > \bigg(\frac{\log (t)}{(mc_1)^{n/(s^+(n-1))}} \bigg)^{\frac{s^+(n-1)}{n}}\bigg\}\Bigg)\\
  &\qquad\le 
 c_2 \exp\bigg(- c_3 \frac{\log (t)}{(mc_1)^{n/(s^+(n-1))}} \bigg) 
 = c_2 t^{-\frac{c_3}{(mc_1)^{n/(s^+(n-1))}}}.
\end{split}
\] 
Whenever we  choose $m>0$ to be so small that $\frac{c_3}{(mc_1)^{n/(s^+(n-1))}}>1$, we obtain
\[
 \begin{split}
 &\int_1^\infty \Ha^{n-\alpha(\cdot)}_\infty \Bigg(\bigg\{x \in D : \tilde I_{\s} \bigg(\frac{ |\nabla u|}{2(1+|D|}\bigg) > \bigg(\frac{\log (t)}{(mc_1)^{n/(s^+(n-1))}} \bigg)^{\frac{s^+(n-1)}{n}}\bigg\}\Bigg)\, dt\\
 &\qquad \le \int_1^\infty c_2 t^{-\frac{c_3}{(mc_1)^{n/(s^+(n-1))}}} \, dt =: c_4< \infty.
 \end{split}
 \]
Now the claim follows by choosing 
\[
a=\biggl(\frac{m}{2(1+\vert D\vert )}\biggr)^{\frac{n}{s^{+}(n-1)}} \mbox {and }
b= \Ha^{n-\alpha(\cdot)}_\infty(D) + c_4\,. \qedhere
\]
 \end{proof}


\bibliographystyle{amsalpha}

\end{document}